\definecolor{grey}{gray}{0.6}
\newcommand{\V}[2][]{{\bm{#1 \mathbf{\MakeLowercase{#2}}}}} 
\newcommand{\M}[2][]{{\bm{#1 \mathbf{\MakeUppercase{#2}}}}} 
\newcommand{\T}[2][]{\boldsymbol{#1 \mathscr{\MakeUppercase{#2}}}}
\newcolumntype{I}{!{\vrule width 1pt}}
\newlength\savedwidth
\newcommand\whline{\noalign{\global\savedwidth\arrayrulewidth
                            \global\arrayrulewidth 1pt}%
           \hline
           \noalign{\global\arrayrulewidth\savedwidth}}
\newlength{\arrayrulewidthOriginal}
\newcommand{\matsz}[2]{\ensuremath{\mathbb{R}^{#1 \times #2}}}
\newcommand{\tensz}[2]{\ensuremath{\mathbb{R}^{[#1,#2]}}}
\newcommand{\becomes}{:=}
\newcommand{\BCSS}{Blocked Compact Symmetric Storage\xspace}
\newcommand{\BCSSshort}{BCSS\xspace}
\newcommand{\Dense}{Dense\xspace}
\newcommand{\Vrow}[1]{{\widehat {\bf{\MakeLowercase{#1}}}}^T}
\newcommand{\VrowT}[1]{{\widehat {\bf{\MakeLowercase{#1}}}}}
\newcommand{\blkDim}[1]{b_{#1}}
\newcommand{\blkedDimC}{\bar p}
\newcommand{\blkedDimA}{\bar n}
\newcommand{\bA}{b_{\M{A}}}
\newcommand{\bC}{b_{\M{C}}}
\newcommand{\nB}{\bar n}
\newcommand{\pB}{\bar p}
\newcommand{\blkI}{\bar \imath}
\newcommand{\blkJ}{\bar \jmath}
\newcommand{\IdxFirst}{0}
\newcommand{\IdxSecond}{1}
\newcommand{\IdxThird}{2}
\newcommand{\IdxFourth}{3}
\newcommand{\IdxNext}[1]{#1}
\newcommand{\IdxSecondNext}[1]{#1+1}
\newcommand{\IdxThirdNext}[1]{#1+2}
\newcommand{\IdxFourthNext}[1]{#1+3}
\newcommand{\IdxLast}[1]{#1-1}
\newcommand{\IdxSecondLast}[1]{#1-2}
\newcommand{\subtwo}[2]{#1,#2}
\newcommand{\IdxParenLast}[1]{(#1-1)}
\renewcommand{\subtwo}[2]{#1#2}
\newcommand{\subthree}[3]{\subtwo{#1}{\subtwo{#2}{#3}}}
\newcommand{\subfour}[4]{\subtwo{#1}{\subthree{#2}{#3}{#4}}}
\newcommand{\fromtoMartin}[2]{#2}
\newcommand{\Eqn}[1]{(\ref{eq:#1})}
\newcommand{\Fig}[1]{Figure~\ref{fig:#1}}
\newcommand{\Tbl}[1]{Table~\ref{tbl:#1}}
\newcommand{\Thm}[1]{Theorem~\ref{thm:#1}}
\newcommand{\R}{\mathbb{R}}
\title{Exploiting Symmetry in Tensors for High Performance:\\
Multiplication with Symmetric Tensors}
\author{
Martin D. Schatz\footnotemark[2]
 \and
Tze Meng Low\footnotemark[2] \and
Robert A. van de Geijn\footnotemark[2]
\and
Tamara G. Kolda\footnotemark[4]
}
\date{Draft \\ \today}
\begin{document}

\maketitle

\renewcommand{\thefootnote}{\fnsymbol{footnote}}
\footnotetext[2]{
Department of Computer Science, Institute for Computational Engineering and Sciences, The University of Texas at Austin,
Austin, TX.\newline Emails: martin.schatz@utexas.edu, ltm@cs.utexas.edu, rvdg@cs.utexas.edu.
}
\footnotetext[4]{Sandia National Laboratories,
Livermore, CA. Email: tgkolda@sandia.gov.}

\begin{abstract}
Symmetric tensor operations arise in a wide variety of computations.
However, the benefits of exploiting symmetry in order to reduce storage and
computation is in conflict with a desire to simplify memory access patterns.  In
this paper, we propose a blocked data structure (\BCSS) wherein we consider the
tensor by blocks and store only the unique blocks of a symmetric tensor.
We propose an algorithm-by-blocks, already shown of benefit for matrix
computations, that exploits this storage format by utilizing a series of
temporary tensors to avoid redundant computation.  Further, partial symmetry
within temporaries is exploited to further avoid redundant storage and redundant
computation.
A detailed analysis shows that, relative to storing and computing with tensors
without taking advantage of symmetry and partial symmetry, storage requirements
are reduced by a factor of $ O\left( m! \right)$ and computational requirements
by a factor of $O\left( (m+1)!/2^m \right)$, where $ m $ is the order of the
tensor.  \fromtoMartin{}{However, as the analysis shows, care must be taken in
choosing the correct block size to ensure these storage and computational
benefits are achieved (particularly for low-order tensors).}
 An implementation demonstrates that storage is greatly reduced and the
 complexity introduced by storing and computing with tensors by blocks is
 manageable. Preliminary results demonstrate
that computational time is also reduced.
The paper concludes with a discussion of how insights in this paper point to opportunities
for generalizing recent advances in the domain of linear algebra libraries to
the field of multi-linear computation.

%
%
%
%
\end{abstract}

\newtheorem{tendef}{Definition}

\section{Introduction}
A tensor is a multi-dimensional or $m$-way array. Tensor computations are
increasingly prevalent in a wide variety of
applications~\cite{Kolda:2009:SIAM:tensor-review}.  Alas, libraries for dense
multi-linear algebra (tensor computations) are in their infancy. 
\fromtoMartin{The target objects of this paper are symmetric tensors; tensors
whose entries are invariant under any permutation of indices.  The aim of this
paper is to explore how ideas from matrix computations can be extended to the
domain of tensors.  Specifically, this paper focuses on
exploring how exploiting symmetry in matrix computations extends to computations
with symmetric tensors and exploring how block structures and algorithms extend
to computations with symmetric tensors.}{  The aim of this
paper is to explore how ideas from matrix computations can be extended to the
domain of tensors.  Specifically, this paper focuses on
exploring how exploiting symmetry in matrix computations extends to computations
with symmetric tensors, tensors whose entries are invariant under any
permutation of indices, and exploring how block structures and algorithms extend
to computations with symmetric tensors.}
 
Libraries for dense linear algebra (matrix computations) have long been part of
the standard arsenal for computational science, including the BLAS
interface~\cite{BLAS1,BLAS2,BLAS3,Goto:2008:AHP,1377607}, LAPACK~\cite{LAPACK3},
and more recent libraries with similar functionality, like the BLAS-like
Interface Software framework (BLIS)~\cite{FLAWN66}, and {\tt
libflame}~\cite{libflame_ref,CiSE09}.  For distributed memory architectures, the
ScaLAPACK~\cite{ScaLAPACK}, PLAPACK~\cite{PLAPACK}, and
Elemental~\cite{Poulson:2012:ENF} libraries provide most of the functionality of
the BLAS and LAPACK.  High-performance implementations of these libraries are
available under open source licenses.

For tensor computations, no high-performance general-purpose libraries exist.
The MATLAB Tensor Toolbox~\cite{TTB_Software,TTB_Dense} defines many commonly
used operations that would be needed by a library for multilinear algebra but
does not have any high-performance kernels nor special computations or data
structures for symmetric tensors.   The PLS Toolbox~\cite{PLS_Eigenvector} provides users
with operations for analyzing data stored as tensors, but does not expose the
underlying system for users to develop their own set of operations. Targetting
distributed-memory environments, the Tensor Contraction Engine (TCE)
project~\cite{TCE} focuses on sequences of tensor contractions and uses compiler
techniques to reduce workspace and operation counts.  The Cyclops Tensor
Framework (CTF)~\cite{CTF} focuses on exploiting symmetry in storage for
distributed memory parallel computation with tensors, but at present does not
include efforts to optimize computation within each computational node.

In a talk at the Eighteenth Householder Symposium meeting
(2011), Charlie Van Loan stated, ``In my opinion, blocking will eventually
have the same impact in tensor computations as it does in matrix computations.''
The approach we take in this paper heavily borrows from the FLAME
project~\cite{CiSE09}.  We use the {\em change-of-basis} operation, also known
as a \underline{S}ymmetric \underline{T}ensor \underline{T}imes \underline{S}ame
\underline{M}atrix (in all modes) ({\tt sttsm}) operations~\cite{TTB_Software},
to motivate the issues and solutions.  In the field of computational chemistry,
this operation is referred to as an atomic integral
transformation~\cite{Bender1972547} when applied to order-$4$ tensors. This
operation appears in other contexts as well, such as computing a low-rank
Tucker-type decomposition of a symmetric tensor~\cite{VanDooren:Jacobi}
\fromtoMartin{}{and blind source separation~\cite{Regalia2013875}}. We propose
algorithms that require significantly less (possibly minimal) computation
\fromtoMartin{}{relative to an approach based on a series of successive
matrix-matrix multiply operations} by computing and storing temporaries.
Additionally, the tensors are stored by blocks, following similar solutions
developed for matrices~\cite{FLAWN12,SuperMatrix:TOMS}.  In addition to many of
the projects mentioned previously, other work, such as that by Van Loan and
Ragnarsson~\cite{Ragnarsson2013853} suggest devising algorithms in terms of
tensor blocks to aid in computation with both symmetric tensors and tensors in
general.

Given that we store the tensor by blocks, the algorithms must be reformulated to
operate with these blocks.
Since we need only store the unique blocks of a symmetric tensor, symmetry is
exploited at the block level (both for storage and computation) while preserving
regularity when computing within blocks. Temporaries are formed to reduce the
computational requirements, similar to work in the field of computational
chemistry~\cite{Bender1972547}.  To further reduce computational and storage
requirements, we exploit partial symmetry within temporaries.  It should be
noted that the symmetry being exploited in this article is different from the
symmetry typically observed in chemistry fields.  One approach for exploiting
symmetry in operations is to store only unique entries and devise algorithms
which only use the unique entries of the symmetric
operands~\cite{Yamamoto200558}.  By contrast, we exploit symmetry in operands by
devising algorithms and storing the objects in such a way that knowledge of the
symmetry of the operands is concealed from the implementation (allowing
symmetric objects to be treated as non-symmetric objects).

The contributions of this paper can be summarized as reducing storage and
computational requirements of the {\tt sttsm} operation for symmetric tensors
by:

\begin{itemize}
  \item Utilizing temporaries to reduce computational costs thereby avoiding redundant computations.
  \item Using blocked algorithms and data structures to improve performance of the given computing environment. 
  \item Providing a framework for exploiting symmetry in symmetric tensors (and partial symmetry in temporaries) thereby reducing storage requirements.
\end{itemize}

The paper analyzes the computational and storage costs demonstrating that the
added complexity of exploiting symmetry need not adversely impact the benefits
derived from symmetry.  An implementation shows that the insights can be made
practical.  The paper concludes by listing additional opportunities for
generalizing advancements in the domain of linear algebra libraries to multi-linear
computation.

%
\section{Preliminaries}

We start with some basic notation, and the motivating tensor operation.

\subsection{Notation}

In this discussion, we assume all indices and modes are numbered
starting at zero.

The \emph{order} of a tensor is the number of ways or modes. In this paper, we
deal only with tensors where every mode has the same dimension. Therefore, we
define $\tensz{m}{n}$ to be the set of real-valued order-$m$ (or $m$-way) tensors where each
mode has dimension $n$; i.e., a tensor $\T{A} \in \tensz{m}{n}$ can be thought
of as an $m$-dimensional cube with $n$ entries in each direction. 

An element of $\T{A}$ is denoted as $\alpha_{i_{\IdxFirst}\cdots
  i_{\IdxLast{m}}}$ where $i_k \in \set{\IdxFirst, \dots, \IdxLast{n}}$ for
all $k \in \set{\IdxFirst, \dots, \IdxLast{m}}$.
This also illustrates that, as a general rule, we use lower case Greek letters
for scalars ($ \alpha, \chi, \ldots $), bold lower case Roman letters for
vectors ($ \V{A}, \V{X}, \ldots $), bold upper case Roman letters for matrices
($ \M{A}, \M{X}, \ldots $), and upper case scripted letters for tensors ($
\T{A}, \T{X}, \ldots $). We denote the $i$th row of a matrix $
\M{A} $ by $ \Vrow{a}_i $.  If we transpose this row, we denote it as
$\VrowT{a}_i$.  

\subsection{Partitioning}
For our forthcoming discussions, it is useful to define the notion of a
\emph{partitioning} of a set $\mathcal{S}$.  We say the sets
$\mathcal{S}_\IdxFirst,\mathcal{S}_\IdxSecond,\ldots,\mathcal{S}_{\IdxLast{k}}$
form a \emph{partitioning} of $\mathcal{S}$ if
\[ 
\mathcal{S}_i \cap \mathcal{S}_j  = \emptyset \text{ for any } i,j \in
\{\IdxFirst,\dots,\IdxLast{k}\} \text{ with } {i \neq j}, 
\] 
\[ 
\mathcal{S}_i
\neq \emptyset \text{ for any } i \in \{\IdxFirst,\dots,\IdxLast{k}\}, 
\] 
and
\[
\bigcup_{i=\IdxFirst}^{\IdxLast{k}} \mathcal{S}_i = \mathcal{S}.
\]

\subsection{Partial Symmetry}
It is possible that a tensor $\T{A}$ may be symmetric in 2 or more modes,
meaning that the entries are invariant to permutations of those modes. For
instance, if $\T{A}$ is a 3-way tensor and symmetric in all modes, then 
\[
\alpha_{\subthree{i_{\IdxFirst}}{i_{\IdxSecond}}{i_{\IdxThird}}} = 
\alpha_{\subthree{i_{\IdxFirst}}{i_{\IdxThird}}{i_{\IdxSecond}}} = 
\alpha_{\subthree{i_{\IdxSecond}}{i_{\IdxFirst}}{i_{\IdxThird}}} = 
\alpha_{\subthree{i_{\IdxSecond}}{i_{\IdxThird}}{i_{\IdxFirst}}} = 
\alpha_{\subthree{i_{\IdxThird}}{i_{\IdxFirst}}{i_{\IdxSecond}}} =
\alpha_{\subthree{i_{\IdxThird}}{i_{\IdxSecond}}{i_{\IdxFirst}}}.
\] 
It may also be that $\T{A}$ is only symmetric in a subset of the modes. For
instance, suppose $\T{A}$ is a 4-way tensor that is symmetric in modes
$\mathcal{S} = \{\IdxSecond,\IdxThird\}$. Then 
\[ 
\alpha_{\subfour{i_{\IdxFirst}}{i_{\IdxSecond}}{i_{\IdxThird}}{i_{\IdxFourth}}} = \alpha_{\subfour{i_{\IdxFirst}}{i_{\IdxThird}}{i_{\IdxSecond}}{i_{\IdxFourth}}}.
\] 
We define this formally below.

Let $\mathcal{S}$ be a finite set. Define $\Pi_{\mathcal{S}}$ to be the set of
all permutations on the set $\mathcal{S}$ where a permutation is viewed as a
bijection from $\mathcal{S}$ to $\mathcal{S}$. Under this interpretation, for
any $\pi \in \Pi_{\mathcal{S}}$, $\pi(x)$ is the resulting element of applying
$\pi$ to $x$\footnote{Throughout this paper, $\pi$ should be interpreted as a
permutation, not as a scalar quantity. All other lowercase Greek letters should
be interpreted as scalar quantities.}.

Let $\mathcal{S} \subseteq \{\IdxFirst,\dots,\IdxLast{m}\}$, and define $\Pi_{S}$ to be the set
of all permutations on $\mathcal{S}$ as described above.
We say an order-$m$ tensor $\T{A}$ is \emph{symmetric in the modes in
$\mathcal{S}$} if
\[ 
\alpha_{\subfour{i'_{\IdxFirst}}{i'_{\IdxSecond}}{\cdots}{i'_{\IdxLast{m}}}} 
=
\alpha_{\subfour{i_{\IdxFirst}}{i_{\IdxSecond}}{\cdots}{i_{\IdxLast{m}}}}
\] 
for any index vector $i'$ defined by
\[
i'_j = \begin{cases} 
\pi(i_j) & \text{if } j \in \mathcal{S}, \\
i_j & \text{otherwise}
\end{cases}
\] 
for $j = \IdxFirst,\dots,\IdxLast{m}$ and $\pi \in \Pi_{\mathcal{S}}$.

Technically speaking, this definition applies even in the trivial case where
$\mathcal{S}$ is a singleton, which is useful for defining multiple symmetries.

\subsection{Multiple Symmetries}
\label{sec:mult_sym}
It is possible that a tensor may have symmetry in multiple sets of modes at
once. As the tensor is not symmetric in all modes, yet still symmetric in some
modes, we say the tensor is \emph{partially}-symmetric.  For instance, suppose
$\T{A}$ is a 4-way tensor that is symmetric in modes $\mathcal{S}_\IdxFirst =
\{\IdxSecond,\IdxThird\}$ and also in modes $\mathcal{S}_\IdxSecond =
\{\IdxFirst,\IdxFourth\}$. Then

\[
\alpha_{\subfour{i_{\IdxFirst}}{i_{\IdxSecond}}{i_{\IdxThird}}{i_{\IdxFourth}}} =
\alpha_{\subfour{i_{\IdxFourth}}{i_{\IdxSecond}}{i_{\IdxThird}}{i_{\IdxFirst}}} =
\alpha_{\subfour{i_{\IdxFirst}}{i_{\IdxThird}}{i_{\IdxSecond}}{i_{\IdxFourth}}} =
\alpha_{\subfour{i_{\IdxFourth}}{i_{\IdxThird}}{i_{\IdxSecond}}{i_{\IdxFirst}}}.
\] 
We define this formally below.

Let $\mathcal{S}_{\IdxFirst},\mathcal{S}_{\IdxSecond},\ldots,\mathcal{S}_{\IdxLast{k}}$ be a partitioning of
$\{\IdxFirst,\dots,\IdxLast{m}\}$.
We say an order-$m$ tensor $\T{A}$ has symmetries defined by the mode
partitioning $\{S_i\}_{i=\IdxFirst}^{\IdxLast{k}}$ if

\[ 
\alpha_{\subfour{i'_{\IdxFirst}}{i'_{\IdxSecond}}{\cdots}{i'_{\IdxLast{m}}}} 
=
\alpha_{\subfour{i_{\IdxFirst}}{i_{\IdxSecond}}{\cdots}{i_{\IdxLast{m}}}}
\] 
for any index vector $i'$ defined by
\[
i'_j = \begin{cases} 
\pi_\IdxFirst(i_j) & \text{if } j \in \mathcal{S}_\IdxFirst, \\
\pi_\IdxSecond(i_j) & \text{if } j \in \mathcal{S}_\IdxSecond, \\
\vdots \\
\pi_{\IdxLast{k}}(i_j) & \text{if } j \in \mathcal{S}_{\IdxLast{k}} \\
\end{cases}
\] 
for $j = \IdxFirst,\dots,\IdxLast{m} \text{ and } \pi_{\ell} \in \Pi_{\mathcal{S}_{\ell}}$ for $\ell = \IdxFirst,\dots,\IdxLast{k}$.

Technically, a tensor with no symmetry whatsoever still fits the definition
above with $k=m$ and $\left\vert\mathcal{S}_i\right\vert = 1$ for
$i=\IdxFirst,\dots,\IdxLast{m}$.
If $k=1$ and $\mathcal{S}_\IdxFirst = \{\IdxFirst,\dots,\IdxLast{m}\}$, then the tensor is \emph{symmetric}.
If $1 < k < m$, then the tensor is \emph{partially symmetric}.
Later, we look at partially symmetric tensors such that
$\mathcal{S}_\IdxFirst = \{\IdxFirst,\dots,\ell\}$ and $\left\vert\mathcal{S}_i\right\vert = 1$ for $i=\IdxSecond,\ldots,\IdxLast{k}$.

\subsection{The {\tt sttsm} operation}

The operation used in this paper to illustrate issues related to
storage of, and computation with, symmetric tensors 
is the \emph{change-of-basis} operation 
\begin{equation}
  \label{eq:cob}
  \T{C} \becomes [\T{A}; \underbrace{\M{X}, \cdots, \M{X}}_{\text{$m $ times}}] = \T{A} \times_{\IdxFirst} \M{X} \times_{\IdxSecond} \cdots \times_{\IdxLast{m}} \M{X},
\end{equation} 
where $ \T{A} \in \tensz{m}{n} $ is symmetric and $ \M{X} \in
\matsz{p}{n}$ is the change-of-basis matrix.   
This is equivalent to multiplying the tensor $\T{A}$ by the same
matrix $\M{X}$ in every mode.
The resulting tensor
$\T{C} \in \tensz{m}{p}$ is defined elementwise as
\begin{displaymath}
\gamma_{\subthree{j_\IdxFirst}{\cdots}{j_{\IdxLast{m}}}} \becomes 
\sum_{i_\IdxFirst=\IdxFirst}^{\IdxLast{n}} \cdots \sum_{i_{\IdxLast{m}}=\IdxFirst}^{\IdxLast{n}} 
\alpha_{\subthree{i_\IdxFirst}{\cdots}{i_{\IdxLast{m}}}} 
\chi_{\subtwo{j_\IdxFirst}{i_\IdxFirst}}
\chi_{\subtwo{j_\IdxSecond}{i_\IdxSecond}} 
\cdots \chi_{\subtwo{j_{\IdxLast{m}}}{i_{\IdxLast{m}}}},  
\end{displaymath}
where $j_k \in \{\IdxFirst,\ldots,\IdxLast{p}\}$ for all $k \in \set{\IdxFirst,\dots,\IdxLast{m}}$.
It can be observed that the resulting tensor $\T{C}$ is itself symmetric.
We refer to this operation \Eqn{cob} as the {\tt{sttsm}} operation.

The {\tt sttsm} operation is used in computing symmetric versions of Tucker and
CP (notably the CP-opt) decompositions for symmetric
tensors~\cite{Kolda:2009:SIAM:tensor-review}.  In the CP decomposition, the
matrix $\M{X}$ of the {\tt sttsm} operation is a single vector.  In the field of
computational chemistry, the {\tt sttsm} operation is used when transforming
atomic integrals~\cite{Bender1972547}.  Many fields utilize a closely-related
operation to the {\tt sttsm} operation, which can be viewed as the
multiplication of a symmetric tensor in all modes {\bf but one}.  Problems such
as calculating Nash-Equlibria for symmetric games~\cite{Cheng04noteson} utilize
this related operation.  We focus on the {\tt sttsm} operation not only to
improve methods that rely on this exact operation, but also to gain insight for
tackling related problems of symmetry in related operations.

%
%

%
%
%
%
%
%
%
%
%
%

%
%
%
%

\section{The Matrix Case}
\label{sec:MatrixCase}

We build intuition about the problem and its solutions by first looking at
symmetric matrices (order-2 symmetric tensors).

\subsection{The operation for $m=2$}

Letting $m = 2$ yields 
$\M{C} \becomes [ \M{A}; \M{X}, \M{X} ]$
where $\M{A} \in \tensz{m}{n}$ 
is an $n \times n$ symmetric matrix,
$\M{C} \in \tensz{m}{p}$ is a $p \times p$ symmetric matrix,
and $[ \M{A}; \M{X}, \M{X} ] = \M{X} \M{A} \M{X}^T $.
For $m=2$, \Eqn{cob} becomes
\begin{alignat}{2}
\label{eqn:matrix1}
\gamma_{\subtwo{j_{\IdxFirst}}{j_{\IdxSecond}}} = 
\sum_{i_{\IdxFirst}=\IdxFirst}^{\IdxLast{n}}
\sum_{i_{\IdxSecond}=\IdxFirst}^{\IdxLast{n}}
\alpha_{\subtwo{i_{\IdxFirst}}{i_{\IdxSecond}}} 
\chi_{\subtwo{j_{\IdxFirst}}{i_{\IdxFirst}}} \chi_{\subtwo{j_{\IdxSecond}}{i_{\IdxSecond}}}.
\end{alignat}

\subsection{Simple algorithms for $m=2$}

\begin{figure}[p]
\begin{center}
\begin{tabular}{| p{2.5in} | p{2.25in} |} \hline
Naive algorithms
&
Algorithms that reduce computation at the expense
  of extra workspace
\\ \hline 
\hline
\multicolumn{2}{|l|}{
$ \T{A} $ is a matrix ($ m = 2 $): $ \M{C} \becomes \M{X} \M{A}
\M{X}^T = [\M{A};\M{X},\M{X}] $}
\\ \hline
\begin{minipage}{2.5in}
\begin{tabular}[t]{@{}l}
{\bf for} $ j_{\IdxSecond} = \IdxFirst, \ldots, \IdxLast{p} $ \\
~~~{\bf for} $ j_{\IdxFirst} = \IdxFirst, \ldots, j_{\IdxSecond} $ \\
~~~~~~$ \gamma_{\subtwo{j_{\IdxFirst}}{j_{\IdxSecond}}} \becomes 0 $ \\
~~~~~~{\bf for} $ i_{\IdxFirst} = \IdxFirst, \ldots, \IdxLast{n} $ \\
~~~~~~~~~{\bf for} $ i_{\IdxSecond} = \IdxFirst, \ldots, \IdxLast{n} $ \\
~~~~~~~~~~~~$ \gamma_{\subtwo{j_{\IdxFirst}}{j_{\IdxSecond}}} \becomes \gamma_{\subtwo{j_{\IdxFirst}}{j_{\IdxSecond}}} + \alpha_{\subtwo{i_{\IdxFirst}}{i_{\IdxSecond}}} \chi_{\subtwo{j_{\IdxFirst}}{i_{\IdxFirst}}}
 \chi_{\subtwo{j_{\IdxSecond}}{i_{\IdxSecond}}} $ \\
~~~~~~~~~{\bf endfor} \\
~~~~~~{\bf endfor} \\
~~~{\bf endfor} \\
{\bf endfor}
\end{tabular}
\end{minipage}
&
\begin{minipage}[c]{1.5in}
\begin{tabular}{@{}l}
{\bf for} $ j_{\IdxSecond} = \IdxFirst, \ldots, \IdxLast{p} $ \\
~~~$ \VrowT{T} \becomes \VrowT{T}_{j_{\IdxSecond}} =  \M{A} \VrowT{X}_{j_{\IdxSecond}} $ \\
~~~{\bf for } $ j_{\IdxFirst} = \IdxFirst, \ldots, j_{\IdxSecond} $ \\
~~~~~~ $ \gamma_{\subtwo{j_{\IdxFirst}}{j_{\IdxSecond}}} \becomes \Vrow{X}_{j_{\IdxFirst}} \VrowT{T} $ \\
~~~{\bf endfor} \\
{\bf endfor}
\end{tabular}
\end{minipage}
\\ \hline  
\multicolumn{2}{c}{}
\\[-0.1in]
\hline
\multicolumn{2}{|l|}{
$ \T{A} $ is a 3-way tensor ($ m = 3 $):
$ \T{C} \becomes [\T{A};\M{X},\M{X}, \M{X}] $
}
\\ \hline
\begin{minipage}{2.5in}
\begin{tabular}[c]{@{}l}
{\bf for} $ j_\IdxThird = \IdxFirst, \ldots, \IdxLast{p} $ \\
~~~{\bf for} $ j_\IdxSecond = \IdxFirst, \ldots, j_\IdxThird $ \\
~~~~~~{\bf for} $ j_\IdxFirst = \IdxFirst, \ldots, j_\IdxSecond$ \\
~~~~~~~~~$ \gamma_{\subthree{j_\IdxFirst}{j_\IdxSecond}{j_\IdxThird}} \becomes 0 $ \\
~~~~~~~~~{\bf for} $ i_\IdxThird = \IdxFirst, \ldots, \IdxLast{n} $ \\
~~~~~~~~~~~~{\bf for} $ i_\IdxSecond = \IdxFirst, \ldots, \IdxLast{n} $ \\
~~~~~~~~~~~~~~~{\bf for} $i_\IdxFirst = \IdxFirst, \ldots, \IdxLast{n} $ \\
~~~~~~~~~~~~~~~~~~$
\gamma_{\subthree{j_\IdxFirst}{j_\IdxSecond}{j_\IdxThird}} +\becomes
$ \\
~~~~~~~~~~~~~~~~~~~~~$ \alpha_{\subthree{i_\IdxFirst}{i_\IdxSecond}{i_\IdxThird}} \chi_{\subtwo{j_\IdxFirst}{i_\IdxFirst}}
 \chi_{\subtwo{j_\IdxSecond}{i_\IdxSecond}} \chi_{\subtwo{j_\IdxThird}{i_\IdxThird}} $ \\
~~~~~~~~~~~~~~~{\bf endfor} \\
~~~~~~~~~$ \iddots $ \\
{\bf endfor}
\end{tabular}
\end{minipage}
&
\begin{minipage}[c]{1.5in}
\begin{tabular}{@{}l}
{\bf for} $ j_\IdxThird = \IdxFirst, \ldots, \IdxLast{p} $ \\
~~~$ \M{T}^{(2)} \becomes \M{T}^{(2)}_{j_\IdxThird} =  \T{A} \times_\IdxThird \Vrow{X}_{j_\IdxThird} $ \\
~~~{\bf for} $ j_\IdxSecond = \IdxFirst, \ldots, j_\IdxThird $ \\
~~~~~~$ \VrowT{T}^{(1)} \becomes \VrowT{T}^{(1)}_{\subtwo{j_\IdxSecond}{j_\IdxThird}} =  \M{T}^{(2)} \times_\IdxSecond \Vrow{X}_{j_\IdxSecond} $ \\
~~~~~~{\bf for} $ j_\IdxFirst = \IdxFirst, \ldots, j_\IdxSecond $ \\
~~~~~~~~~$ \gamma_{\subthree{j_\IdxFirst}{j_\IdxSecond}{j_\IdxThird}} \becomes \VrowT{T}^{(1)} \times_\IdxFirst \Vrow{X}_{j_\IdxFirst} $ \\
~~~~~~{\bf endfor} \\
~~~{\bf endfor} \\
{\bf endfor}
\end{tabular}
\end{minipage}
\\ \hline
\multicolumn{2}{c}{}
\\[-0.1in]
\hline
\multicolumn{2}{|l|}{
$ \T{A} $ is an $m$-way tensor: $ \T{C} \becomes [\T{A};\M{X},\cdots,\M{X}] $
}
\\ \hline
\begin{minipage}{2.5in}
\begin{tabular}[c]{@{}l}
{\bf for} $ j_{\IdxLast{m}} = \IdxFirst, \ldots, \IdxLast{p} $ \\
~~~$ \ddots $ \\
~~~~~~{\bf for} $ j_\IdxFirst = \IdxFirst, \ldots, j_\IdxSecond$ \\
~~~~~~~~~$ \gamma_{\subthree{j_\IdxFirst}{\cdots}{j_{\IdxLast{m}}}} \becomes 0 $ \\
~~~~~~~~~{\bf for} $ i_{\IdxLast{m}} = \IdxFirst, \ldots, \IdxLast{n} $ \\
~~~~~~~~~~~~$ \ddots $ \\
~~~~~~~~~~~~~~~{\bf for} $i_\IdxFirst = \IdxFirst, \ldots, \IdxLast{n} $ \\
~~~~~~~~~~~~~~~~~~$ \gamma_{\subthree{j_\IdxFirst}{\cdots}{j_{\IdxLast{m}}}} +\becomes $\\
~~~~~~~~~~~~~~~~~~~~~~~$ \alpha_{\subthree{i_\IdxFirst}{\cdots}{i_\IdxThird}} \chi_{\subtwo{j_\IdxFirst}{i_\IdxFirst}}
\cdots \chi_{\subtwo{j_{\IdxLast{m}}}{i_{\IdxLast{m}}}} $ \\
~~~~~~~~~~~~~~~{\bf endfor} \\
~~~~~~~~~$ \iddots $ \\
{\bf endfor}
\end{tabular}
\end{minipage}
&
\begin{minipage}[c]{1.5in}
\begin{tabular}{@{}l}
{\bf for} $ j_{\IdxLast{m}} = \IdxFirst, \ldots, \IdxLast{p} $ \\
~~~$ \T{T}^{(m-1)} \becomes \T{T}_{j_{\IdxLast{m}}} =  \T{A} \times_{\IdxLast{m}} \Vrow{X}_{j_{\IdxLast{m}}} $ \\
~~~~~~$\ddots$ \\
~~~~~~{\bf for} $ j_\IdxSecond = \IdxFirst, \ldots, j_\IdxThird $ \\
~~~~~~~~~$ \VrowT{T}^{(1)} \becomes \VrowT{T}_{\subthree{j_\IdxSecond}{\cdots}{j_{\IdxLast{m}}}} =  \M{T}^{(2)} \times_\IdxSecond \Vrow{X}_{j_\IdxSecond} $ \\
~~~~~~~~~{\bf for} $ j_\IdxFirst = \IdxFirst, \ldots, j_\IdxSecond $ \\
~~~~~~~~~~~~$ \gamma_{\subthree{j_\IdxFirst}{\cdots}{j_{\IdxLast{m}}}} \becomes \VrowT{T}^{(1)} \times_\IdxFirst \Vrow{X}_{j_\IdxFirst}$ \\
~~~~~~~~~{\bf endfor} \\
~~~~~~{\bf endfor} \\
~~~~~~$\iddots$ \\
{\bf endfor}
\end{tabular}
\end{minipage}
\\ \hline
\end{tabular}
\end{center} 
\caption{Algorithms for $ \T{C} \becomes [ \T{A}; \M{X},
  \cdots , \M{X}] $ that compute with scalars.  In order to facilitate
  the comparing and contrasting of algorithms, we present algorithms
  for the special cases where $ m = 2, 3 $ (top and middle) as well as
  the general case (bottom).  For each, we give the naive algorithm on
the left and the algorithm that reduces computation at the expense of
temporary storage on the right.}
\label{fig:MatrixAlg}
\label{fig:3DAlg}
\end{figure}

%

Based on (\ref{eqn:matrix1}), a naive algorithm that only computes the upper
triangular part of symmetric matrix $\M{C} = \M{X} \M{A} \M{X}^T$ is given in
Figure~\ref{fig:MatrixAlg}~(top left), at a cost of approximately $3 p^2 n^2$
floating point operations (flops).
The algorithm to its right reduces flops by storing intermediate results and
taking advantage of symmetry.  It is motivated by observing that
\begin{alignat}{2} 
& \M{X} \M{A} \M{X}^T &&= \M{X} 
\begin{array}[t]{c}
\underbrace{\M{A} \M{X}^T} \\
\M{T} 
\end{array} \nonumber \\
&  &&=
\left( \begin{array}{c} 
\Vrow{X}_\IdxFirst \\
\vdots \\ 
\Vrow{X}_{\IdxLast{p}} 
\end{array}
\right) 
\begin{array}[t]{c}
\underbrace{
\M{A}
\left( \begin{array}{c c c} 
\VrowT{X}_\IdxFirst & \cdots & \VrowT{X}_{\IdxLast{p}}
\end{array}
\right)
} \\
\left( \begin{array}{c c c} 
\VrowT{T}_\IdxFirst & \cdots & \VrowT{T}_{\IdxLast{p}}
\end{array}
\right)
\end{array}
=
\left( \begin{array}{c} 
\Vrow{X}_\IdxFirst \\
\vdots \\ 
\Vrow{X}_{\IdxLast{p}} 
\end{array}
\right)
\left( \begin{array}{c c c} 
\VrowT{T}_\IdxFirst & \cdots & \VrowT{T}_{\IdxLast{p}}
\end{array}
\right), \nonumber \\
\end{alignat}
where $\VrowT{T}_j = \M{A} \VrowT{X}_j \in \mathbb{R}^{n}$ and $\VrowT{X}_{j} \in
\mathbb{R}^{n}$ (recall that $\VrowT{x}_j$ denotes the transpose of
the $j$th row of $\M{X}$).
This algorithm requires approximately
$ 2 pn^2 + p^2n$ flops at the expense of
requiring temporary space for a vector $\VrowT{T} $.

\subsection{\BCSS(\BCSSshort) for $m=2$}

Since matrices $\M{C}$ and $\M{A}$ are symmetric, it saves space to store only
the upper (or lower) triangular part of those matrices.
We consider storing the upper triangular part.
While for matrices the savings is modest (and rarely exploited), the savings is
more dramatic for tensors of higher order.

To store a symmetric matrix, consider packing the elements of the upper triangle
tightly into memory with the following ordering of unique elements:
\[
\begin{pmatrix}
0 & 1 & 3 & \cdots \\
  & 2 & 4 & \cdots \\
  &   & 5 & \cdots \\
  &   &   & \ddots 
\end{pmatrix}.
\]
Variants of this theme have been proposed over the
course of the last few decades but have never caught on due to the
complexity that is introduced when indexing the elements of the
matrix~\cite{Gunnels:2001:FFL,BaKoPl11}.  Given that this complexity only increases with the tensor
order, we do not pursue this idea.

%
\begin{figure}[tb!]
\begin{center}
\begin{tabular}{|@{\hspace{4pt}} l @{\hspace{4pt}} I @{\hspace{4pt}}l @{\hspace{4pt}}| c  | c | c | c |} \hline
Relative storage of \BCSSshort & & \multicolumn{4}{c|}{$\blkedDimA=\lceil n/\blkDim{\M{A}} \rceil$} \\ \cline{3-6}
& & 2 & 4 & 8 & 16 \\ 
& & 
{
\setlength{\unitlength}{0.15in}
\begin{picture}(2,2)
\put(1,0){\framebox(1,1){}}
\put(0,1){\framebox(1,1){}}
\put(1,1){\framebox(1,1){}}
\end{picture}
}
&
{
\setlength{\unitlength}{0.075in}
\begin{picture}(4,4)
\multiput(0,3)(1,0){4}{\framebox(1,1){}}
\multiput(1,2)(1,0){3}{\framebox(1,1){}}
\multiput(2,1)(1,0){2}{\framebox(1,1){}}
\multiput(3,0)(1,0){1}{\framebox(1,1){}}
\end{picture}
}
&
{
\setlength{\unitlength}{0.0375in}
\begin{picture}(8,8)
\multiput(0,7)(1,0){8}{\framebox(1,1){}}
\multiput(1,6)(1,0){7}{\framebox(1,1){}}
\multiput(2,5)(1,0){6}{\framebox(1,1){}}
\multiput(3,4)(1,0){5}{\framebox(1,1){}}
\multiput(4,3)(1,0){4}{\framebox(1,1){}}
\multiput(5,2)(1,0){3}{\framebox(1,1){}}
\multiput(6,1)(1,0){2}{\framebox(1,1){}}
\multiput(7,0)(1,0){1}{\framebox(1,1){}}
\end{picture}
}
&
{
\setlength{\unitlength}{0.01875in}
\begin{picture}(16,16)
\multiput(0,15)(1,0){16}{\framebox(1,1){}}
\multiput(1,14)(1,0){15}{\framebox(1,1){}}
\multiput(2,13)(1,0){14}{\framebox(1,1){}}
\multiput(3,12)(1,0){13}{\framebox(1,1){}}
\multiput(4,11)(1,0){12}{\framebox(1,1){}}
\multiput(5,10)(1,0){11}{\framebox(1,1){}}
\multiput(6,9)(1,0){10}{\framebox(1,1){}}
\multiput(7,8)(1,0){9}{\framebox(1,1){}}
\multiput(8,7)(1,0){8}{\framebox(1,1){}}
\multiput(9,6)(1,0){7}{\framebox(1,1){}}
\multiput(10,5)(1,0){6}{\framebox(1,1){}}
\multiput(11,4)(1,0){5}{\framebox(1,1){}}
\multiput(12,3)(1,0){4}{\framebox(1,1){}}
\multiput(13,2)(1,0){3}{\framebox(1,1){}}
\multiput(14,1)(1,0){2}{\framebox(1,1){}}
\multiput(15,0)(1,0){1}{\framebox(1,1){}}
\end{picture}
}
\\ \whline 
relative to minimal storage & $ \frac{n(n+1)/2}{n(n+\blkDim{\M{A}})/2} $ &
0.67 & 0.80 & 0.89 & 0.94 \\ \hline
relative to dense storage & $ \frac{n^2}{n(n+\blkDim{\M{A}})/2} $ &
1.33 & 1.60 & 1.78 & 1.88 \\ \whline
\end{tabular}
\end{center}
\caption{Storage savings factor of \BCSSshort when $n=512$.}
\label{fig:blockedStorageSavings}

\end{figure}


Instead, we embrace an idea, storage by blocks, that was introduced into the
{\tt libflame} library~\cite{FLAWN12,libflame_ref,CiSE09} in order to support
algorithms by blocks.
Submatrices (blocks) become units of data and operations with those blocks
become units of computation.
Partition the symmetric matrix  $\M{A} \in \matsz{n}{n}$ into submatrices as
\begin{equation}
  \label{eq:ACB}
\M{A} = \begin{pmatrix}
\M{A}_{\subtwo{\IdxFirst}{\IdxFirst}} & \M{A}_{\subtwo{\IdxFirst}{\IdxSecond}} & \M{A}_{\subtwo{\IdxFirst}{\IdxThird}} & \cdots & \M{A}_{\subtwo{\IdxFirst}{\IdxParenLast{\nB}}} \\
\color{grey} \M{A}_{\subtwo{\IdxSecond}{\IdxFirst}} & \M{A}_{\subtwo{\IdxSecond}{\IdxSecond}} & \M{A}_{\subtwo{\IdxSecond}{\IdxThird}} & \cdots & \M{A}_{\subtwo{\IdxSecond}{\IdxParenLast{\nB}}} \\
\color{grey} \M{A}_{\subtwo{\IdxThird}{\IdxFirst}} & \color{grey} \M{A}_{\subtwo{\IdxThird}{\IdxSecond}} & \M{A}_{\subtwo{\IdxThird}{\IdxThird}} & \cdots & 
\M{A}_{\subtwo{\IdxThird}{\IdxParenLast{\nB}}} \\
\vdots & \vdots & \vdots & \ddots & \vdots \\
\color{grey} \M{A}_{\subtwo{\IdxParenLast{\nB}}{\IdxFirst}} &\color{grey} \M{A}_{\subtwo{\IdxParenLast{\nB}}{\IdxSecond}} & \color{grey} \M{A}_{\subtwo{\IdxParenLast{\nB}}{\IdxThird}} & \cdots & \M{A}_{\subtwo{\IdxParenLast{\nB}}{\IdxParenLast{\nB}}} 
\end{pmatrix}.
\end{equation}
Here each submatrix $\M{A}_{\subtwo{\blkI_\IdxFirst}{\blkI_\IdxSecond}} \in \mathbb{R}^{\bA \times \bA}$.
We define $\nB = n/\bA$ where, without loss of generality, we assume $\bA$ evenly divides $n$.
Hence $\M{A}$ is a blocked $\nB \times \nB$ matrix with
blocks of size $\bA \times \bA$.  
The blocks are stored using some conventional method (e.g., each
$\M{A}_{\subtwo{\blkI_\IdxFirst}{\blkI_\IdxSecond}}$ is stored in column-major
order).
For symmetric matrices, the blocks below the diagonal are redundant and need not
be stored (indicated by gray coloring). We do not store the data these blocks
represent explicitly; instead, we store information at these locations informing
us how to obtain the required data.  By doing this, we can retain a simple
indexing scheme into $\M{A}$ that avoids the complexity associated with storing
only the unique entries.
Although the diagonal blocks are themselves symmetric, we do not take advantage
of this in order to simplify the access pattern for the computation with those
blocks.
We refer to this storage technique as \BCSS(\BCSSshort) throughout the rest of
this paper.

Storing the upper triangular \emph{individual elements} of the symmetric matrix
$\M{A}$ requires storage of \[ n(n+1)/2 = \binom{n+1}{2} ~\rm floats.
\] In constrast, storing the upper triangular \emph{blocks} of the symmetric
matrix $\M{A}$ with \BCSSshort requires \[ n(n+\bA)/2 = \bA^2 \binom{\nB+1}{2}
~\rm floats.
\] The \BCSSshort scheme requires a small amount of additional storage,
depending on $\bA$.
\Fig{blockedStorageSavings} illustrates how the storage for \BCSSshort
approaches the cost of storing only the upper triangular elements (here $n = 512
$) as the number of blocks increases.

\subsection{Algorithm-by-blocks for $m=2$}

Given that $\M{C}$ and $\M{A}$ are stored with \BCSSshort, we now need to
discuss how the algorithm computes with these blocks.
Partition $\M{A}$ as in \Eqn{ACB},
\begin{displaymath}
  \M{C} = \begin{pmatrix}
    \M{C}_{\subtwo{\IdxFirst}{\IdxFirst}} & \cdots & \M{C}_{\subtwo{\IdxFirst}{\IdxParenLast{\pB}}} \\
    \vdots & \ddots & \vdots \\
    \color{grey} \M{C}_{\subtwo{\IdxParenLast{\pB}}{\IdxFirst}}
    & \cdots & 
    \M{C}_{\subtwo{\IdxParenLast{\pB}}{\IdxParenLast{\pB}}} 
  \end{pmatrix}
  \mbox{, and }
  \M{X} = \begin{pmatrix}
    \M{X}_{\subtwo{\IdxFirst}{\IdxFirst}} & \cdots & \M{X}_{\subtwo{\IdxFirst}{\IdxParenLast{\nB}}} \\
    \vdots & \ddots & \vdots \\
    \M{X}_{\subtwo{\IdxParenLast{\pB}}{\IdxFirst}} & \cdots & \M{X}_{\subtwo{\IdxParenLast{\pB}}{\IdxParenLast{\nB}}} 
  \end{pmatrix}.
\end{displaymath}
Without loss of generality, $\pB = p / \bC$ is integral, and 
the blocks of $\M{C}$ and $\M{X}$ are of size $\bC \times \bC$ and
$\bC \times \bA$, respectively.
Then $\M{C} \becomes \M{X} \M{A} \M{X}^T$ means that
\begin{align}
\label{eqn:blocked_2D}
\M{C}_{\subtwo{\blkJ_{\IdxFirst}}{\blkJ_{\IdxSecond}}} &=& 
\left(\begin{array}{c c c c c}
\M{X}_{\subtwo{\blkJ_{\IdxFirst}}{\IdxFirst}} & \cdots & \M{X}_{\subtwo{\blkJ_{\IdxFirst}}{\IdxParenLast{\nB}}} 
\end{array} \right)
\left(\begin{array}{c c c c c}
\M{A}_{\subtwo{\IdxFirst}{\IdxFirst}} & \cdots & \M{A}_{\subtwo{\IdxFirst}{\IdxParenLast{\nB}}} \\
\vdots & \ddots & \vdots \\
\color{grey} \M{A}_{\subtwo{\IdxParenLast{\nB}}{\IdxFirst}} & \cdots & \M{A}_{\subtwo{\IdxParenLast{\nB}}{\IdxParenLast{\nB}}} 
\end{array} \right)
\left(\begin{array}{c}
\M{X}_{\subtwo{\blkJ_{\IdxSecond}}{\IdxFirst}}^T \\
\vdots \\
\M{X}_{\subtwo{\blkJ_{\IdxSecond}}{\IdxParenLast{\nB}}}^T 
\end{array} \right) \nonumber
\\
&=&
\sum_{\blkI_{\IdxFirst}=\IdxFirst}^{\IdxLast{\nB}}
\sum_{\blkI_{\IdxSecond}=\IdxFirst}^{\IdxLast{\nB}} \M{X}_{\subtwo{\blkJ_{\IdxFirst}}{\blkI_{\IdxFirst}}} \M{A}_{\subtwo{\blkI_{\IdxFirst}}{\blkI_{\IdxSecond}}} \M{X}_{\subtwo{\blkJ_{\IdxSecond}}{\blkI_{\IdxSecond}}}^T 
= 
\sum_{\blkI_{\IdxFirst}=\IdxFirst}^{\IdxLast{\nB}}
\sum_{\blkI_{\IdxSecond}=\IdxFirst}^{\IdxLast{\nB}} [\M{A}_{\subtwo{\blkI_{\IdxFirst}}{\blkI_{\IdxSecond}}}; \M{X}_{\subtwo{\blkJ_{\IdxFirst}}{\blkI_{\IdxFirst}}}, \M{X}_{\subtwo{\blkJ_{\IdxSecond}}{\blkI_{\IdxSecond}}} ] \mbox{~(in tensor
  notation)}.
\end{align}
This yields the algorithm in \Fig{2D}, in which an analysis of its cost is also
given.
This algorithm avoids redundant computation, except within symmetric blocks on
the diagonal.
Comparing (\ref{eqn:blocked_2D}) to (\ref{eqn:matrix1}), we see that the only
difference lies in replacing scalar terms with their block counterparts. 
Consequently, comparing this algorithm with the one in
\Fig{MatrixAlg}~(top right), we notice that every scalar has simply been
replaced by a block (submatrix).  The algorithm now computes a
temporary matrix $\M{T} = \M{A} \M{X}_{\blkJ_{\IdxSecond}:}^T$ instead of a
temporary vector $\VrowT{t} = \M{A}\VrowT{x}_{j_\IdxSecond}$ as in
\Fig{MatrixAlg} for each index $\IdxFirst \le \blkJ_{\IdxSecond} < \blkedDimC$.
It requires $n  \times \bC$ extra storage instead of $n$ extra storage in addition to
the storage for $\M{C}$ and $\M{A} $.

%
%
%
%
%
%
%

%
%
%

%
%
%
%

\begin{figure}[tbp!]
{%
\setlength{\arraycolsep}{2pt}
\begin{center}
\footnotesize
\begin{tabular}{| @{\hspace{3pt}} l @{\hspace{3pt}}| c | c | c |} \hline
Algorithm & Ops & Total \# of & Temp. \\
& (flops) & times executed & storage \\ \hline \hline
{\bf for} $ \blkJ_{\IdxSecond} = \IdxFirst, \ldots, \IdxLast{\blkedDimC} $  & & & \\
\tiny
~~~~~~$ 
\begin{array}[t]{c}
\underbrace{
\left(\begin{array}{c} 
\M{T}_{\IdxFirst}^T \\
\vdots \\ 
\M{T}_{\IdxLast{\blkedDimA}}^T 
\end{array} \right)
}
\\
\M{T}
\end{array}
\becomes 
\begin{array}[t]{c}
\underbrace{
\left(\begin{array}{c c c c c}
\M{A}_{\subtwo{\IdxFirst}{\IdxFirst}} & \cdots & \M{A}_{\subtwo{\IdxFirst}{\IdxParenLast{\blkedDimA}}} \\
\vdots & \ddots & \vdots \\
\color{grey} \M{A}_{\subtwo{\IdxParenLast{\blkedDimA}}{\IdxFirst}} & \cdots & \M{A}_{\subtwo{\IdxParenLast{\blkedDimA}}{\IdxParenLast{\blkedDimA}}} 
\end{array} \right)
} \\
\M{A}
\end{array}
\begin{array}[t]{c}
\underbrace{
\left(\begin{array}{c} 
\M{X}_{\subtwo{\blkJ_{\IdxSecond}}{\IdxFirst}}^T \\
\vdots \\ 
\M{X}_{\subtwo{\blkJ_{\IdxSecond}}{\IdxParenLast{\blkedDimA}}}^T 
\end{array} \right)
}
\\
\M{X}_{\blkJ_{\IdxSecond}}^T
\end{array}
$ &
$ 2 \blkDim{\M{C}} n^2 $ & $ \blkedDimC $ & $\blkDim{\M{C}} n$
\\[0.3in]
~~~~~~{\bf for} $ \blkJ_{\IdxFirst} = \IdxFirst, \ldots , \blkJ_{\IdxSecond} $ & & & \\
~~~~~~~~~~~~$
\M{C}_{\subtwo{\blkJ_{\IdxFirst}}{\blkJ_{\IdxSecond}}} 
\becomes
\left(\begin{array}{c c c c c}
\M{X}_{\subtwo{\blkJ_{\IdxFirst}}{\IdxFirst}} & \cdots & \M{X}_{\subtwo{\blkJ_{\IdxFirst}}{\IdxParenLast{\blkedDimA}}} 
\end{array} \right)
\left(\begin{array}{c} 
\M{T}_{\IdxFirst}^T \\
\vdots \\ 
\M{T}_{\IdxLast{\blkedDimA}}^T 
\end{array} \right)
$ 
& $ 2 \blkDim{\M{C}} ^2 n $  & $ \blkedDimC ( \blkedDimC +1 )/2 $ &
\\
~~~~~~{\bf endfor} & & & \\
{\bf endfor} & & &
\\ \hline
\multicolumn{4}{| c}{}
\\[-0.1in]
\hline
\multicolumn{4}{| r |}{
\begin{minipage}{4.75in}
\[
\begin{array}{l}
\mbox{Total Cost:~~} 
2 \blkDim{\M{C}} n^2 \blkedDimC + 
2 \blkDim{\M{C}}^2 n \left(\blkedDimC(\blkedDimC+1)/2\right)  =
\sum_{d=0}^1%
\left( 
2 \blkDim{\M{C}}^{d+1} n^{2-d} 
\binom{\blkedDimC +d}{d+1}
\right)
\approx
2 p n^2 + 
p^2 n
\mbox{~flops}
\\
\mbox{Total temporary storage:~~}
\blkDim{\M{C}}n
=
\sum_{d=0}^0%
\left( 
\blkDim{\M{C}}^{d+1} n^{1-d}
\right)
\mbox{~entries}
\end{array}
\]
\end{minipage}}
\\ \hline
\end{tabular}
\end{center}%
}%
\caption{Algorithm-by-blocks for computing $ \M{C} \becomes \M{X} \M{A} \M{X}^T =
[ \M{A}; \M{X}, \M{X} ] $.  The algorithm assumes that $ \M{C} $ is
partitioned into blocks of size $ \blkDim{\M{C}} \times \blkDim{\M{C}} $, with $ \blkedDimC = \lceil p/ \blkDim{\M{C}}
\rceil $. An expression using summations is given to help in identifying a pattern later on.}
\label{fig:2D}

{%
\setlength{\arraycolsep}{2pt}
\begin{center}
\footnotesize
\begin{tabular}{|@{\hspace{5pt}} l @{\hspace{5pt}}| c | c | c |} \hline
Algorithm & Ops & Total \# of & Temp. \\
& (flops) & times executed & storage \\ \hline \hline
{\bf for} $ \blkJ_\IdxThird = \IdxFirst, \ldots, \IdxLast{\blkedDimC} $ & & & \\ 
~~~~~~  $  
\left( \begin{array}{c c c}
\T{T}^{(2)}_{\subtwo{\IdxFirst}{\IdxFirst}} & \cdots & \T{T}^{(2)}_{\subtwo{\IdxFirst}{\IdxParenLast{\blkedDimA}}} \\
\vdots & \ddots & \vdots \\
\T{T}^{(2)}_{\subtwo{\IdxParenLast{\blkedDimA}}{\IdxFirst}} & \cdots & \T{T}^{(2)}_{\subtwo{\IdxParenLast{\blkedDimA}}{\IdxParenLast{\blkedDimA}}} \\
\end{array} \right)
\becomes $ & & & \\
~~~~~~~~~~~~~~~~~~~~~~  $  
\T{A} \times_\IdxThird
\left(\begin{array}{c c c c c}
\M{X}_{\subtwo{\blkJ_\IdxThird}{\IdxFirst}} & \cdots & \M{X}_{\subtwo{\blkJ_\IdxThird}{\IdxParenLast{\blkedDimA}}} 
\end{array} \right)
$ 
& 
$ 2 \blkDim{\T{C}} n^3 $ & $ \blkedDimC $ & $\blkDim{\T{C}} n^2$
\\
~~~~~~{\bf for} $ \blkJ_\IdxSecond = \IdxFirst, \ldots , \blkJ_\IdxThird $ & & & \\
~~~~~~~~~~~~$ 
\left(
\begin{array}{c}
\T{T}^{(1)}_\IdxFirst \\
\vdots \\
\T{T}^{(1)}_{\IdxLast{\blkedDimA}}
\end{array}\right)
\becomes 
\left( \begin{array}{c c c}
\T{T}^{(2)}_{\subtwo{\IdxFirst}{\IdxFirst}} & \cdots & \T{T}^{(2)}_{\subtwo{\IdxFirst}{\IdxParenLast{\blkedDimA}}} \\
\vdots & \ddots & \vdots \\
\T{T}^{(2)}_{\subtwo{\IdxParenLast{\blkedDimA}}{\IdxFirst}} & \cdots & \T{T}^{(2)}_{\subtwo{\IdxParenLast{\blkedDimA}}{\IdxParenLast{\blkedDimA}}} \\
\end{array} \right) $ 
& 
$2 \blkDim{\T{C}}^{2} n^2$
&
$
\begin{array}{c}
\blkedDimC (\blkedDimC +1)/2 \\
= \\
\binom{\blkedDimC + 1}{2}
\end{array}
$
&
$\blkDim{\T{C}}^2 n$
\\
$ ~~~~~~~~~~~~~~~~~~~~~~~~~~~~~~~~~~~~\times_\IdxSecond
\left(\begin{array}{c c c} 
\M{X}_{\subtwo{\blkJ_\IdxSecond}{\IdxFirst}} & \cdots & \M{X}_{\subtwo{\blkJ_\IdxSecond}{\IdxParenLast{\blkedDimA}}} 
\end{array} \right)
$ 
& & &
\\
~~~~~~~~~~~~{\bf for} $ \blkJ_\IdxFirst = \IdxFirst, \ldots , \blkJ_\IdxSecond $ & & & \\
~~~~~~~~~~~~~~~~~~  $ 
\T{C}_{\subthree{\blkJ_\IdxFirst}{\blkJ_\IdxSecond}{\blkJ_\IdxThird}}
\becomes $ & & & \\
~~~~~~~~~~~~~~~~~~~~ $
\left(
\begin{array}{c}
\T{T}^{(1)}_\IdxFirst \\
\vdots \\
\T{T}^{(1)}_{\IdxLast{\blkedDimA}}
\end{array}\right)
\times_\IdxFirst
\left(\begin{array}{c c c} 
\M{X}_{\subtwo{\blkJ_\IdxFirst}{\IdxFirst}} & \cdots & \M{X}_{\subtwo{\blkJ_\IdxFirst}{\IdxParenLast{\blkedDimA}}} 
\end{array} \right)
$
& 
$2 \blkDim{\T{C}}^{3} n$
&
$
\begin{array}{c}
\frac{\blkedDimC (\blkedDimC +1)(\blkedDimC +2)}{6} = \\
\binom{\blkedDimC + 2}{3}
\end{array}
$
&
\\
~~~~~~~~~~~~{\bf endfor} & & &\\
~~~~~~{\bf endfor} & & &\\
{\bf endfor} & & &
\\ \hline
\multicolumn{4}{|c}{}
\\[-0.1in]
\hline
\multicolumn{4}{| r |}{
\begin{minipage}{4.75in}
\[
\begin{array}{l}
\mbox{Total Cost:~~} 
\sum_{d=0}^2%
\left( 
2 \blkDim{\T{C}}^{d+1} n^{3-d} 
\binom{\blkedDimC +d}{d+1}
\right)
\approx 
2 p n^3 + p^2 n^2 + \frac{p^3 n}{3}
\mbox{~flops}
\\
\mbox{Total temporary storage:~~} 
\blkDim{\T{C}}n^2 + \blkDim{\T{C}}^2n
 =
\sum_{d=0}^1%
\left( 
\blkDim{\T{C}}^{d+1} n^{2-d}
\right)
\mbox{~entries}
\end{array}
\]
\end{minipage}}
\\ \hline
\end{tabular}
\end{center}%
}%
\caption{Algorithm-by-blocks for computing $ [ \T{A}; \M{X}, \M{X}, \M{X} ] $.  The algorithm assumes that $ \T{C} $ is
partitioned into blocks of size $ \blkDim{\T{C}} \times \blkDim{\T{C}} \times \blkDim{\T{C}}  $, with $ \blkedDimC = \lceil p/ \blkDim{\T{C}}
\rceil $. An expression using summations is given to help in identifying a pattern later on.}
\label{fig:3D}
\end{figure}


%
%
%
%

\section{The 3-way Case}

We extend the insight gained in the last section to the case where $ \T{C} $ and
$ \T{A} $ are symmetric order-$3$ tensors before moving on to the general
order-$m$ case in the next section.

\subsection{The operation for $m=3$}

Now $ \T{C} \becomes [ \T{A}; \M{X}, \M{X}, \M{X} ] $ where
$ \T{A} \in
\tensz{m}{n} $, $ \T{C} \in \tensz{m}{p} $, and $ [ \T{A}; \M{X}, \M{X}, \M{X} ] = \T{A}
\times_\IdxFirst \M{X} \times_\IdxSecond \M{X} \times_\IdxThird \M{X} $.
In 
our discussion, $ \T{A}$ is a symmetric tensor, as is $\T{C}$ by virtue of the
operation applied to $\T{A}$.
Now, \[
\begin{array}{l c l}
\gamma_{\subthree{j_\IdxFirst}{j_\IdxSecond}{j_\IdxThird}} & = & \T{A} \times_\IdxFirst \Vrow{X}_{j_\IdxFirst} \times_\IdxSecond \Vrow{X}_{j_\IdxSecond} \times_\IdxThird \Vrow{X}_{j_\IdxThird} 
= \sum_{i_\IdxFirst=\IdxFirst}^{\IdxLast{n}} ( \T{A} \times_\IdxSecond \Vrow{X}_{j_\IdxSecond} \times_\IdxThird \Vrow{X}_{j_\IdxThird} )_{i_\IdxFirst} \times_\IdxFirst \chi_{\subtwo{j_\IdxFirst}{i_\IdxFirst}} \\
& = & \sum_{i_\IdxFirst=\IdxFirst}^{\IdxLast{n}} ( \sum_{i_\IdxSecond=\IdxFirst}^{\IdxLast{n}} (\T{A} \times_\IdxThird
\Vrow{X}_{j_\IdxThird})_{i_\IdxSecond} \times_\IdxSecond \chi_{\subtwo{j_\IdxSecond}{i_\IdxSecond}} )_{i_\IdxFirst} \times_\IdxFirst
\chi_{\subtwo{j_\IdxFirst}{i_\IdxFirst}} \\  
&=& \sum_{i_\IdxThird=\IdxFirst}^{\IdxLast{n}} \sum_{i_\IdxSecond=\IdxFirst}^{\IdxLast{n}} \sum_{i_\IdxFirst=\IdxFirst}^{\IdxLast{n}} \alpha_{\subthree{i_\IdxFirst}{i_\IdxSecond}{i_\IdxThird}} \chi_{\subtwo{j_\IdxFirst}{i_\IdxFirst}} \chi_{\subtwo{j_\IdxSecond}{i_\IdxSecond}} \chi_{\subtwo{j_\IdxThird}{i_\IdxThird}} .
\end{array}
\]

\subsection{Simple algorithms for $m=3$}

A naive algorithm is given in Figure~\ref{fig:3DAlg}~(middle left). The cheaper
algorithm to its right is motivated by {
\setlength{\arraycolsep}{2pt}
\begin{eqnarray*}
\lefteqn{
\T{A} \times_\IdxFirst \M{X} \times_\IdxSecond \M{X} \times_\IdxThird \M{X} =
\T{A} \times_\IdxFirst
\left( \begin{array}{c} 
\Vrow{X}_\IdxFirst \\
\vdots \\ 
\Vrow{X}_{\IdxLast{p}} 
\end{array}
\right)
\times_\IdxSecond
\left( \begin{array}{c} 
\Vrow{X}_\IdxFirst \\
\vdots \\ 
\Vrow{X}_{\IdxLast{p}} 
\end{array}
\right)
\times_\IdxThird
\left( \begin{array}{c} 
\Vrow{X}_\IdxFirst \\
\vdots \\ 
\Vrow{X}_{\IdxLast{p}} 
\end{array}
\right)} \\
& = &
\begin{array}[t]{c}
\underbrace{
\left( \begin{array}{c c c} 
\M{T}_{\IdxFirst}^{(2)} & \cdots & \M{T}_{\IdxLast{p}}^{(2)} \\
\end{array}
\right)} \\
\T{T}^{(2)}
\end{array}
\times_\IdxFirst
\left( \begin{array}{c} 
\Vrow{X}_\IdxFirst \\
\vdots \\ 
\Vrow{X}_{\IdxLast{p}} 
\end{array}
\right)
\times_\IdxSecond
\left( \begin{array}{c} 
\Vrow{X}_\IdxFirst \\
\vdots \\ 
\Vrow{X}_{\IdxLast{p}} 
\end{array}
\right) \\
& = &
\begin{array}[t]{c}
\underbrace{
\left( \begin{array}{c c c} 
\V{T}^{(1)}_{\subtwo{\IdxFirst}{\IdxFirst}} & \cdots & \V{T}^{(1)}_{\subtwo{\IdxFirst}{\IdxParenLast{p}}} \\
\vdots & \ddots & \vdots \\ 
\V{T}^{(1)}_{\subtwo{\IdxParenLast{p}}{\IdxFirst}} & \cdots & \V{T}^{(1)}_{\subtwo{\IdxParenLast{p}}{\IdxParenLast{p}}} \\
\end{array}
\right)
} \\
\T{T}^{(1)}
\end{array}
\times_\IdxFirst
\left( \begin{array}{c} 
\Vrow{X}_\IdxFirst \\
\vdots \\ 
\Vrow{X}_{\IdxLast{p}} 
\end{array}
\right),
\end{eqnarray*}%
}%
where 

\fromtoMartin{}{
\[
\begin{array}{l c l}
\M{T}_{i_\IdxThird}^{(2)} \in \mathbb{R}^{n
\times n} & \text{and} & \M{T}_{i_\IdxThird}^{(2)} = \T{A} \times_\IdxThird
\Vrow{X}_{i_\IdxThird}, \\ \\
\V{T}^{(1)}_{\subtwo{i_\IdxSecond}{i_\IdxThird}} \in
\mathbb{R}^{n} & \text{and} & \V{T}^{(1)}_{\subtwo{i_\IdxSecond}{i_\IdxThird}} =
\M{T}_{i_\IdxThird}^{(2)} \times_\IdxSecond \Vrow{X}_{i_\IdxSecond} = \T{A}
\times_\IdxThird \Vrow{X}_{i_\IdxThird} \times_\IdxSecond
\Vrow{X}_{i_\IdxSecond}, \\ \\
\Vrow{X}_{j} \in \mathbb{R}^{1 \times n}. \\
\end{array}
\]
}

\fromtoMartin{
Although it appears $\T{T}^{(2)}$ is a vector (oriented
across the page) of matrices, it is actually a vector (oriented \emph{into} the
page) of matrices (each $\M{T}^{(2)}_{i_\IdxThird}$ should be viewed a matrix
oriented first down and then across the page).
Similarly, $\T{T}^{(1)}$ should be viewed as a matrix (first oriented
\emph{into} the page, then across the page) of vectors (each
$\V{T}^{(1)}_{i_\IdxSecond i_\IdxThird}$ should be viewed as a vector oriented
down the page).
This is a result of the mode-multiplication and is an aspect that is difficult
to represent on paper.}{}

This algorithm requires $p(2n^3 + p(2n^2 + 2pn)) = 2pn^3 + 2p^2n^2 + 2p^3n$
flops at the expense of requiring workspace for a matrix $\M{T}$ of size $n
\times n$ and vector $ \V{T} $ of length $n$.

\subsection{\BCSSshort for $m=3$}
In the matrix case (Section~\ref{sec:MatrixCase}), we described \BCSSshort, which stores only the blocks in the
upper triangular part of the matrix.  The storage
scheme used in the 3-way case is analogous to the matrix case; the difference
is that instead of storing blocks belonging
to a 2-way upper triangle, we must store the blocks in the ``upper triangular''
region of a 3-way tensor.  This region is comprised of all indices $
(i_\IdxFirst,i_\IdxSecond,i_\IdxThird) $ where $ i_\IdxFirst \le i_\IdxSecond
\le i_\IdxThird $.
For lack of a better term, we refer to this as \emph{upper hypertriangle} of the
tensor.

Similar to how we extended the notion of the upper triangular region of a 3-way
tensor, we must extend the notion of a block to three dimensions.  Instead of a
block being a two-dimensional submatrix, a block for 3-way tensors becomes a
3-way subtensor.
Partition tensor $\T{A} \in \tensz{3}{n}$ into cubical blocks 
$\T{A}_{\subthree{\blkI_\IdxFirst}{\blkI_\IdxSecond}{\blkI_\IdxThird}}$ of size
$\blkDim{\T{A}} \times \blkDim{\T{A}} \times \blkDim{\T{A}}$:
{
\setlength{\arraycolsep}{1pt}
\[
\begin{array}{l}
\T{A}_{\subthree{:}{:}{\IdxFirst}} = \left(\begin{array}{c c c c} 
\T{A}_{\subthree{\IdxFirst}{\IdxFirst}{\IdxFirst}} & \color{grey} \T{A}_{\subthree{\IdxFirst}{\IdxSecond}{\IdxFirst}} & \cdots & \color{grey} \T{A}_{\subthree{\IdxFirst}{\IdxParenLast{\blkedDimA}}{\IdxFirst}} \\
\color{grey} \T{A}_{\subthree{\IdxSecond}{\IdxFirst}{\IdxFirst}} & \color{grey} \T{A}_{\subthree{\IdxSecond}{\IdxSecond}{\IdxFirst}} & \cdots & \color{grey} \T{A}_{\subthree{\IdxSecond}{\IdxParenLast{\blkedDimA}}{\IdxFirst}} \\ 
\vdots & \vdots & \ddots & \vdots \\
\color{grey} \T{A}_{\subthree{\IdxParenLast{\blkedDimA}}{\IdxFirst}{\IdxFirst}} &\color{grey} \T{A}_{\subthree{\IdxParenLast{\blkedDimA}}{\IdxSecond}{\IdxFirst}} & \cdots & \color{grey} \T{A}_{\subthree{\IdxParenLast{\blkedDimA}}{\IdxParenLast{\blkedDimA}}{\IdxFirst}} 
\end{array} \right),
\cdots, \\
\T{A}_{\subthree{:}{:}{\IdxParenLast{\blkedDimA}}} = 
 \left(\begin{array}{c c c c} 
\T{A}_{\subthree{\IdxFirst}{\IdxFirst}{\IdxParenLast{\blkedDimA}}} & \T{A}_{\subthree{\IdxFirst}{\IdxSecond}{\IdxParenLast{\blkedDimA}}} & \cdots & \T{A}_{\subthree{\IdxFirst}{\IdxParenLast{\blkedDimA}}{\IdxParenLast{\blkedDimA}}} \\
\color{grey} \T{A}_{\subthree{\IdxSecond}{\IdxFirst}{\IdxParenLast{\blkedDimA}}} & \T{A}_{\subthree{\IdxSecond}{\IdxSecond}{\IdxParenLast{\blkedDimA}}} & \cdots & \T{A}_{\subthree{\IdxSecond}{\IdxParenLast{\blkedDimA}}{\IdxParenLast{\blkedDimA}}} \\
\vdots & \vdots & \ddots & \vdots \\
\color{grey} \T{A}_{\subthree{\IdxParenLast{\blkedDimA}}{\IdxFirst}{\IdxParenLast{\blkedDimA}}} &\color{grey} \T{A}_{\subthree{\IdxParenLast{\blkedDimA}}{\IdxSecond}{\IdxParenLast{\blkedDimA}}} & \cdots & \T{A}_{\subthree{\IdxParenLast{\blkedDimA}}{\IdxParenLast{\blkedDimA}}{\IdxParenLast{\blkedDimA}}} 
\end{array} \right),
\end{array}
\]%
}%
where $ \blkedDimA = n/\blkDim{\T{A}} $ (w.l.o.g. assume $\blkDim{\T{A}}$ divides $n$).
These blocks are stored using some conventional method 
and the blocks lying outside
the upper hypertriangular region are not stored.  Once again, we do not
take advantage of any symmetry within blocks (blocks with $\blkI_\IdxFirst =
\blkI_\IdxSecond$, $\blkI_\IdxFirst = \blkI_\IdxThird$, or $\blkI_\IdxSecond = \blkI_\IdxThird$) to
simplify the access pattern when computing with these blocks.

\begin{table}[tb!]
\centering
\begin{tabular}{| l I c | c | c |}
\hline
 & {Compact (Minimum)}
 & {Blocked Compact (\BCSSshort)}
 &  {Dense}\\ \whline
 $m=2$ & $
\frac{(n+1)n}{2} = \binom{n + 1}{2}$ & $\blkDim{\T{A}}^2\binom{\blkedDimA + 1}{2}$ & $n^2$ \\ \hline
 $m=3$ & $\binom{n + 2}{3}$ & $\blkDim{\T{A}}^3\binom{\blkedDimA + 2}{3}$ & $n^3$ \\ \hline
 $m=d$ & $\binom{n+d-1}{d}$ & $\blkDim{\T{A}}^d\binom{\blkedDimA + d-1}{d}$ & $n^d$ \\ \hline
\end{tabular}
\caption{Storage requirements for a tensor $\T{A}$ under different storage schemes.}
\label{tbl:storage_requirements}
\end{table}

As summarized in \Tbl{storage_requirements}, we see that while storing
{\em only} the upper hypertriangular elements of the tensor $ \T{A} $ requires $
\binom{n + 2}{3} $ storage, while \BCSSshort requires $
\blkDim{\T{A}}^3\binom{\blkedDimA + 2}{3} $ elements.  However, since $
\binom{\blkedDimA + 2}{3}\blkDim{\T{A}}^3 \approx
\frac{\blkedDimA^3}{3!}\blkDim{\T{A}}^3 = \frac{n^3}{6}$, we achieve a savings
of approximately a factor $ 6 $ if $ \blkedDimA $ is large enough, relative to
storing all elements.
Once again, we can apply the same storage method to $\T{C}$ for additional
savings.

Blocks such that $\blkI_\IdxFirst = \blkI_\IdxSecond \neq \blkI_\IdxThird$,
$\blkI_\IdxFirst = \blkI_\IdxThird \neq \blkI_\IdxSecond$, or $\blkI_\IdxSecond
= \blkI_\IdxThird \neq \blkI_\IdxFirst$ still have some symmetry and so
 are referred to as \emph{partially}-symmetric blocks.

\subsection{Algorithm-by-blocks for $m=3$}

We now discuss an algorithm-by-blocks for the 3-way case.
Partition $ \T{C} $ and $ \T{A} $ into blocks of size $ \blkDim{\T{C}} \times
\blkDim{\T{C}} \times \blkDim{\T{C}}$ and $ \blkDim{\T{A}} \times \blkDim{\T{A}}
\times \blkDim{\T{A}}$, respectively, and partition $ \M{X}$ into $
\blkDim{\T{C}} \times \blkDim{\T{A}} $ blocks.
Then, extending the insights we gained from the matrix case, $ \T{C} \becomes
[\T{A};\M{X},\M{X},\M{X}] $ means that
\begin{eqnarray*}
\T{C}_{\subthree{\blkJ_\IdxFirst}{\blkJ_\IdxSecond}{\blkJ_\IdxThird}} &=& 
\sum_{\blkI_\IdxFirst=\IdxFirst}^{\IdxLast{\blkedDimA}}
\sum_{\blkI_\IdxSecond=\IdxFirst}^{\IdxLast{\blkedDimA}}
\sum_{\blkI_\IdxThird=\IdxFirst}^{\IdxLast{\blkedDimA}} \T{A}_{\subthree{\blkI_\IdxFirst}{\blkI_\IdxSecond}{\blkI_\IdxThird}} \times_\IdxFirst \M{X}_{\subtwo{\blkJ_\IdxFirst}{\blkI_\IdxFirst}}
\times_\IdxSecond \M{X}_{\subtwo{\blkJ_\IdxSecond}{\blkI_\IdxSecond}} \times_\IdxThird \M{X}_{\subtwo{\blkJ_\IdxThird}{\blkI_\IdxThird}} \\
&=&
\sum_{\blkI_\IdxFirst=\IdxFirst}^{\IdxLast{\blkedDimA}}
\sum_{\blkI_\IdxSecond=\IdxFirst}^{\IdxLast{\blkedDimA}}
\sum_{\blkI_\IdxThird=\IdxFirst}^{\IdxLast{\blkedDimA}} [\T{A}_{\subthree{\blkI_\IdxFirst}{\blkI_\IdxSecond}{\blkI_\IdxThird}}; \M{X}_{\subtwo{\blkJ_\IdxFirst}{\blkI_\IdxFirst}},
\M{X}_{\subtwo{\blkJ_\IdxSecond}{\blkI_\IdxSecond}}, \M{X}_{\subtwo{\blkJ_\IdxThird}{\blkI_\IdxThird}} ]. %
\end{eqnarray*}
This yields the algorithm in Figure~\ref{fig:3D}, in which an analysis
of its cost is also given.
This algorithm avoids redundant computation, except for within blocks of $\T{C}$
that are symmetric or partially symmetric.  The algorithm computes temporaries
$\T{T}^{(2)} = \T{A} \times_{\IdxThird} \M{X}_{\blkJ_{\IdxThird}:}$ and
$\T{T}^{(1)} = \T{T}^{(2)} \times_{\IdxSecond} \M{X}_{\blkJ_{\IdxSecond}:}$ for
each index where $\IdxFirst \le \blkJ_{\IdxThird} < \blkedDimC$ and $\IdxFirst
\le \blkJ_{\IdxSecond} \le \blkJ_{\IdxThird}$.
The algorithm requires $ \blkDim{\T{C}}n^2 +  \blkDim{\T{C}}^2 n$ extra
storage (for $\T{T}^{(2)}$ and $\T{T}^{(1)}$, respectively), in addition to the
storage for $ \T{C} $ and $ \T{A} $.

%
%
%
%

\section{The $m$-way Case}

%
%
%
We now generalize to tensors $ \T{C} $ and $ \T{A} $ of any order.

\subsection{The operation for order-$m$ tensors}
For general $m$, we have $ \T{C} \becomes [ \T{A}; \M{X}, \M{X}, \cdots, \M{X} ] $ where $ \T{A} \in
\tensz{m}{n} $, $ \T{C} \in \tensz{m}{p} $, and $ [ \T{A}; \M{X}, \M{X}, \cdots \M{X} ] = \T{A}
\times_\IdxFirst \M{X} \times_\IdxSecond \M{X} \cdots \times_{\IdxLast{m}} \M{X} $.
In our discussion, $ \T{A}$ is a symmetric tensor, as is $\T{C}$ by virtue
of the operation applied to $\T{A}$.

Recall that $ \gamma_{\subfour{j_\IdxFirst}{j_\IdxSecond}{\cdots}{j_{\IdxLast{m}}}} $
denotes the $ (j_\IdxFirst,j_\IdxSecond,\ldots,j_{\IdxLast{m}}) $ element of the
order-$m$ tensor $\T{C}$.
Then, by simple extension of our previous derivations, we find that \[
\begin{array}{l c l}
\gamma_{\subfour{j_\IdxFirst}{j_\IdxSecond}{\cdots}{j_{\IdxLast{m}}}} & = & \T{A} \times_\IdxFirst \Vrow{X}_{j_\IdxFirst} \times_\IdxSecond \Vrow{X}_{j_\IdxSecond} \cdots \times_{\IdxLast{m}} \Vrow{X}_{j_{\IdxLast{m}}} \\
& = & \sum_{i_{\IdxLast{m}}=\IdxFirst}^{\IdxLast{n}} \cdots \sum_{i_\IdxFirst=\IdxFirst}^{\IdxLast{n}} \alpha_{\subfour{i_\IdxFirst}{i_\IdxSecond}{\cdots}{i_{\IdxLast{m}}}} \chi_{\subtwo{j_\IdxFirst}{i_\IdxFirst}} \chi_{\subtwo{j_\IdxSecond}{i_\IdxSecond}} \cdots \chi_{\subtwo{j_{\IdxLast{m}}}{i_{\IdxLast{m}}}}.
\end{array}
\]

\subsection{Simple algorithms for general $m$}

A naive algorithm with a cost of $ (m+1) p^m n^m $ flops is given in
Figure~\ref{fig:3DAlg}~{(bottom left)}.
By comparing the loop structure of the naive algorithms in the 2-way and 3-way
cases, the pattern for a cheaper algorithm (in terms of flops) in the
$m$-way case should become obvious.  Extending the cheaper algorithm in the 3-way
case suggests the algorithm given in Figure~\ref{fig:3DAlg}~(bottom right), This
algorithm requires \[ 2pn^m + 2p^2n^{m-1} + \cdots 2p^{m-1}n =
2\displaystyle\sum_{i=0}^{m-1} p^{i+1}n^{m-i} \mbox{~\rm flops} \] at the expense of requiring workspace for temporary tensors of order $1$ through $m-1$ with modes of dimension $n$.

\subsection{\BCSSshort for general $m$}

We now consider \BCSSshort for the general 
$m$-way case.  The upper hypertriangular
region now contains all indices $
(i_\IdxFirst,i_\IdxSecond,\ldots,i_{\IdxLast{m}}) \text{ where } i_\IdxFirst \le
i_\IdxSecond \le \ldots \le i_{\IdxLast{m}} $.
Using the 3-way case as a guide, one can envision by extrapolation how a
block partitioned order-$m$ tensor looks.
The tensor $\T{A} \in \tensz{m}{n}$ is partitioned into hyper-cubical blocks of
size $\blkDim{\T{A}}^m$.
The blocks lying outside the upper hypertriangular region are not stored.  Once
again, we do not take advantage of symmetry within blocks.

\begin{figure}[tb!]
\centering
\includegraphics[width=3.0in]{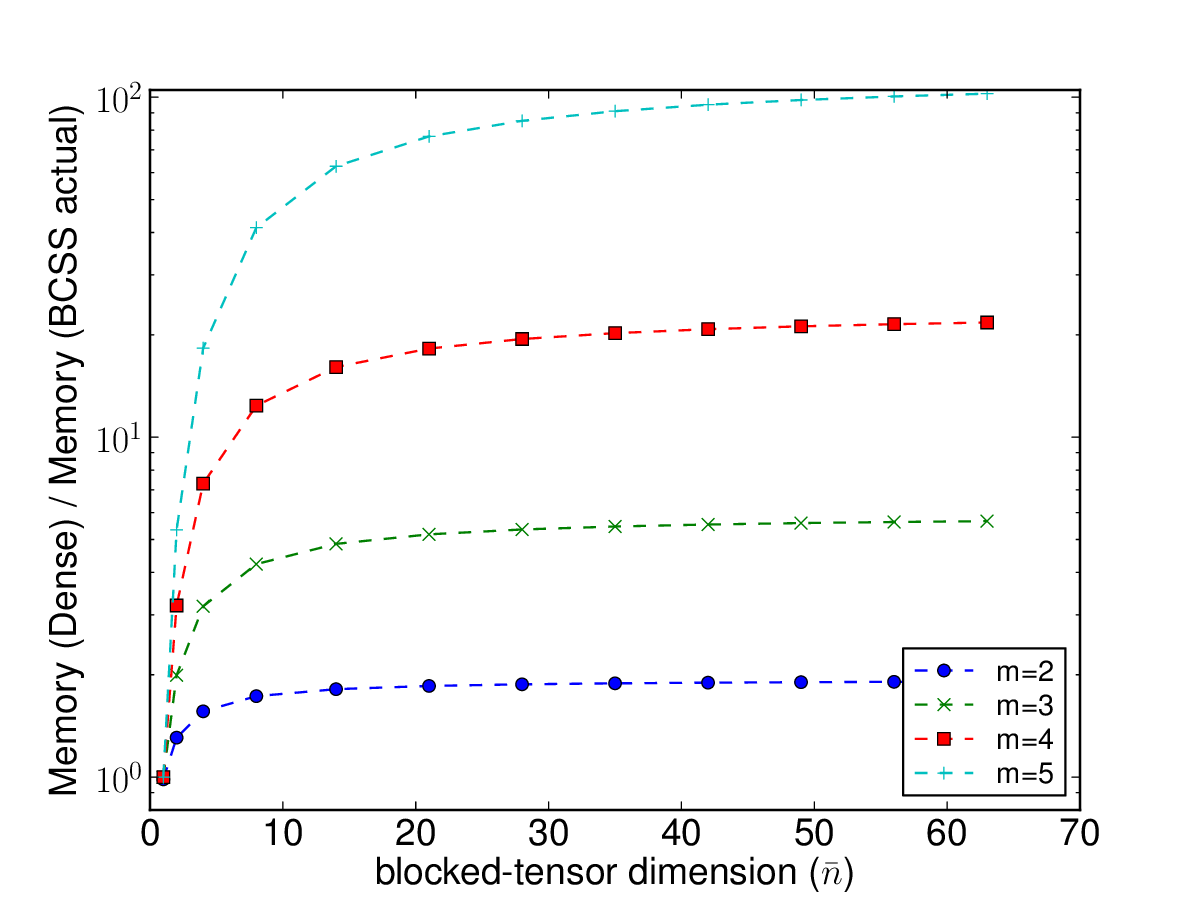}
\caption{Actual storage savings of \BCSSshort on $\T{A}$ with block dimension $\blkDim{\T{A}} = 8$.  This includes the number of entries required for associated meta-data}
\label{fig:approximation_effect_savings}
\end{figure}
 
As summarized in \Tbl{storage_requirements}, storing {\em only} the
upper hypertriangular elements of the tensor $ \T{A} $ requires $ \binom{n + m -
1}{m} $ storage, and \BCSSshort requires $ \binom{\blkedDimA + m -
1}{m}\blkDim{\T{A}}^m $ elements which achieves a savings factor of $m!$ (if
$\blkedDimA$ is large enough).

Although the approximation $ \binom{\blkedDimA + m - 1}{m}\blkDim{\T{A}}^m
\approx \frac{n^m}{m!} $ is used, the lower-order terms have a significant
effect on the actual storage savings factor.  In
\Fig{approximation_effect_savings}, we show the actual storage savings of
\BCSSshort (including storage required for meta-data entries) over storing all
entries of a symmetric tensor.  Examining
Figure~\ref{fig:approximation_effect_savings}, we see that as we increase $m$, a
larger $\blkedDimA$ is required to have the actual storage savings factor
approach the theoretical factor. While this figure only shows the results for a
particular value of $\blkDim{\T{A}}$, the effect applies to all values of
$\blkDim{\T{A}}$.  This idea of blocking has been used in many projects
including the Tensor Contraction Engine (TCE) project~\cite{TCE,
Ragnarsson:2012:BTU:2340215.2340219,EpifanovskyWKLZKMKDK13}.

\subsection{Algorithm-by-blocks for general $m$}
\begin{figure}[tb!]
{%
\setlength{\arraycolsep}{2pt}
\footnotesize
\begin{center}
\begin{tabular}{| l | c | c | c |} \hline
Algorithm & Ops & Total \# of & Temp. \\
& (flops) &  times executed & storage  \\ \hline \hline
{\bf for} $ \blkJ_{\IdxLast{m}} = \IdxFirst, \ldots, \IdxLast{\blkedDimC} $ & & & \\
~~~~~~$
\T{T}^{(m-1)}  
\becomes 
\T{A} \times_{\IdxLast{m}}
\left(\begin{array}{c c c c c}
\M{X}_{\subtwo{\blkJ_{\IdxLast{m}}}{\IdxFirst}} & \cdots & \M{X}_{\subtwo{\blkJ_{\IdxLast{m}}}{\IdxParenLast{\blkedDimA}}} 
\end{array} \right)
$ 
&
$ 2 \blkDim{\T{C}} n^m $ & $ \binom{\blkedDimC}{1} $
& $\blkDim{\T{C}} n^{m-1}$
\\
~~~~~~~~~$\ddots$ & $ \vdots $  & $ \vdots $ & $ \vdots $ \\
 ~~~~~~~~~{\bf for} $ \blkJ_\IdxSecond = \IdxFirst, \ldots , \blkJ_\IdxThird $ & & & \\
~~~~~~~~~~~~~~~$ 
\T{T}^{(1)}
\becomes
\T{T}^{(2)} \times_\IdxSecond
\left(\begin{array}{c c c} 
\M{X}_{\subtwo{\blkJ_\IdxSecond}{\IdxFirst}} & \cdots & \M{X}_{\subtwo{\blkJ_\IdxSecond}{\IdxParenLast{\blkedDimA}}} 
\end{array} \right)
$ 
&
$2 \blkDim{\T{C}}^{m-1} n^2$
&
$
\binom{\blkedDimC + m - 2}{m-1}
$
&
$\blkDim{\T{C}}^{m-1}n$
\\
~~~~~~~~~~~~~~~{\bf for} $ \blkJ_\IdxFirst = \IdxFirst, \ldots , \blkJ_\IdxSecond $ & & &\\
~~~~~~~~~~~~~~~~~~~~~$ 
\T{C}_{\subfour{\blkJ_\IdxFirst}{\blkJ_\IdxSecond}{\cdots}{\blkJ_{\IdxLast{m}}}}
\becomes $ & & & \\
~~~~~~~~~~~~~~~~~~~~~~~~~$ 
\T{T}^{(1)} 
\times_\IdxFirst
\left(\begin{array}{c c c} 
\M{X}_{\subtwo{\blkJ_\IdxFirst}{\IdxFirst}} & \cdots & \M{X}_{\subtwo{\blkJ_\IdxFirst}{\IdxParenLast{\blkedDimA}}} 
\end{array} \right) = $ & & & \\
~~~~~~~~~~~~~~~~~~~~~~~~~\scriptsize$ 
\left(
\begin{array}{c}
\T{T}^{(1)}_\IdxFirst \\
\vdots \\
\T{T}^{(1)}_{\IdxLast{\blkedDimA}}
\end{array}\right)
\times_\IdxFirst
\left(\begin{array}{c c c} 
\M{X}_{\subtwo{\blkJ_\IdxFirst}{\IdxFirst}} & \cdots & \M{X}_{\subtwo{\blkJ_\IdxFirst}{\IdxParenLast{\blkedDimA}}} 
\end{array} \right)
$
&
$2 \blkDim{\T{C}}^{m} n$
&
$
\binom{\blkedDimC + m - 1}{m}
$
&
\\
~~~~~~~~~~~~~~~{\bf endfor} & & & \\
~~~~~~~~~{\bf endfor} & & & \\
~~~~~~~~~$\iddots$ & & & \\
{\bf endfor} & & & \\ \hline 
\multicolumn{4}{c}{}
\\[-0.1in]
\hline
\multicolumn{4}{| l |}{
\begin{minipage}{4.5in}
\[
\begin{array}{l}
\mbox{Total Cost:~~}
\sum_{d=0}^{m-1}
\left( 
2 \blkDim{\T{C}}^{d+1} n^{m-d} 
\binom{\blkedDimC +d}{d+1}
\right)
\mbox{~flops}
\\
\mbox{Total additional storage:~~}
\sum_{d=0}^{m-2}
\left( 
\blkDim{\T{C}}^{d+1} n^{m-1-d} 
\right)
\mbox{~floats}
\end{array}
\]
\end{minipage}}
\\ \hline
\end{tabular}
\end{center}%
}%
\caption{Algorithm-by-blocks for computing $ \T{C} \becomes [ \T{A};
  \M{X}, \cdots , \M{X} ] $.  The algorithm assumes that $ \T{C} $ is
partitioned into blocks of size $ [m,\blkDim{\T{C}}] $, with $ \blkedDimC = \lceil p/ \blkDim{\T{C}}
\rceil $. }
\label{fig:mD}
\end{figure}

For $ \T{C} $ and $ \T{A} $ for general $m$ stored using \BCSSshort, we discuss
how to compute with these blocks.
Assume the partioning discussed above.
Then,
\begin{eqnarray*}
\begin{array}{r @{\hspace{3pt}} l c l}
\T{C}_{\subfour{\blkJ_\IdxFirst}{\blkJ_\IdxSecond}{\cdots}{\blkJ_{\IdxLast{m}}}} 
&=
\sum_{\blkI_\IdxFirst=\IdxFirst}^{\IdxLast{\blkedDimA}}
\cdots
\sum_{\blkI_{\IdxLast{m}}=\IdxFirst}^{\IdxLast{\blkedDimA}} \T{A}_{\subfour{\blkI_\IdxFirst}{\blkI_\IdxSecond}{\cdots}{\blkI_{\IdxLast{m}}}} \times_\IdxFirst
\M{X}_{\subtwo{\blkJ_\IdxFirst}{\blkI_\IdxFirst}} \times_\IdxSecond \M{X}_{\subtwo{\blkJ_\IdxSecond}{\blkI_\IdxSecond}} \cdots \times_{\IdxLast{m}}
\M{X}_{\subtwo{\blkJ_{\IdxLast{m}}}{\blkI_{\IdxLast{m}}}}
\\
&= 
\sum_{\blkI_\IdxFirst=\IdxFirst}^{\IdxLast{\blkedDimA}}
\cdots
\sum_{\blkI_{\IdxLast{m}}=\IdxFirst}^{\IdxLast{\blkedDimA}} [\T{A}_{\subfour{\blkI_\IdxFirst}{\blkI_\IdxSecond}{\cdots}{\blkI_{\IdxLast{m}}}};
\M{X}_{\subtwo{\blkJ_\IdxFirst}{\blkI_\IdxFirst}}, \M{X}_{\subtwo{\blkJ_\IdxSecond}{\blkI_\IdxSecond}}, \cdots, \M{X}_{\subtwo{\blkJ_{\IdxLast{m}}}{\blkI_{\IdxLast{m}}}}]. 
\end{array}
\end{eqnarray*}
This yields the algorithm given in Figure~\ref{fig:mD}, which avoids much
redundant computation, except for within blocks of $\T{C}$ that are symmetric
or partially symmetric.  The algorithm computes temporaries 
\[
\begin{array}{lcl}
\T{T}^{(m-1)} & = & \T{A} \times_{\IdxLast{m}} \M{X}_{\blkJ_{\IdxLast{m}}:} \\
\T{T}^{(m-2)} & = & \T{T}^{(m-1)} \times_{\IdxSecondLast{m}}
\M{X}_{\blkJ_{\IdxSecondLast{m}}:} \\
 & \vdots & \\
\T{T}^{(1)} & = & \T{T}^{(2)} \times_{\IdxSecond}
\M{X}_{\blkJ_{\IdxSecond}:}
\end{array}
\] 
for each index where $\IdxFirst \le \blkJ_{\IdxSecond} \le \blkJ_{\IdxThird} \le
\cdots \le \blkJ_{\IdxLast{m}} < \blkedDimC$.
This algorithm requires $ \blkDim{\T{C}}n^{\IdxLast{m}} +
\blkDim{\T{C}}^2n^{\IdxSecondLast{m}} + \cdots +
\blkDim{\T{C}}^{\IdxSecondLast{m}} n$ extra storage (for
$\T{T}^{(\IdxLast{m})}$ through $\T{T}^{(1)}$, respectively), in addition to the
storage for $ \T{C} $ and $ \T{A} $.

We realize this approach can result in a small loss of symmetry (due to
numerical instability) within blocks.  We do not address this effect at this
time as the asymmetry only becomes a factor when the resulting tensor is used in
subsequent operations.  A post-processing step can be applied to correct
any asymmetry in the resulting tensor.
%
%

\section{Exploiting Partial Symmetry}

We have shown how to reduce the complexity of the {\tt sttsm} operation by
$O(m!)$ in terms of storage.  In this section, we
describe how to achieve the $O((m+1)!/2^m)$ level of reduction in computation.

\subsection{Partial Symmetry}
Recall that in \Fig{mD} we utilized a series of temporaries to compute
the {\tt sttsm} operation.  To perform the computation, we explicitly formed the
temporaries $\T{T}^{(k)}$ and did not take advantage of any symmetry in the
objects' entries.  Because of this, we were only able to see an $O(m)$
reduction in storage and computation.

However, as we now show, there exists partial symmetry within each
temporary that we can exploit to reduce storage and computation as we did for
the output tensor $\T{C}$.  Exploiting this partial symmetry allows the proposed
algorithm to match the theoretical reduction in storage and computation.
\vspace{0.1in}
\begin{theorem}
\label{thm:psymTemp}
Given an order-$m$ tensor $\T{A} \in \R^{I_\IdxFirst \times I_{\IdxSecond} \times \cdots
\times I_{\IdxLast{m}}}$ that has modes $\IdxFirst$ through $\IdxNext{k}$ symmetric
(thus $I_\IdxFirst = I_\IdxSecond = \cdots = I_\IdxNext{k}$), then $\T{C} = \T{A}
\times_{\IdxNext{k}} \M{X}$ has modes $\IdxFirst$ through $\IdxLast{k}$ symmetric.
\end{theorem}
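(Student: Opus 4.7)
The plan is to argue directly from the elementwise definition of mode-$k$ multiplication together with the definition of partial symmetry given earlier in the paper. First I would write out the entries of $\T{C}$: for any index vector $(i_\IdxFirst,\ldots,i_{\IdxLast{k}},j_k,i_{\IdxNext{k+1}},\ldots,i_{\IdxLast{m}})$,
\[
\gamma_{i_\IdxFirst \cdots i_{\IdxLast{k}}\, j_k\, i_{\IdxNext{k+1}} \cdots i_{\IdxLast{m}}}
\;=\;
\sum_{i_k=\IdxFirst}^{\IdxLast{I_k}} \alpha_{i_\IdxFirst \cdots i_{\IdxLast{k}}\, i_k\, i_{\IdxNext{k+1}} \cdots i_{\IdxLast{m}}}\,\chi_{j_k\, i_k}.
\]
This makes it transparent that only mode $k$ of $\T{A}$ participates in the contraction, while modes $\IdxFirst$ through $\IdxLast{k}$ (along with all trailing modes) appear only as passive indices on both sides.

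Next I would invoke the hypothesis that $\T{A}$ is symmetric in modes $\IdxFirst$ through $\IdxNext{k}$, which, taking $\mathcal{S}=\{\IdxFirst,\dots,\IdxNext{k}\}$ in the definition from the excerpt, implies symmetry over any subset of those modes; in particular over $\{\IdxFirst,\dots,\IdxLast{k}\}$. So for any permutation $\pi \in \Pi_{\{\IdxFirst,\dots,\IdxLast{k}\}}$, the entries of $\T{A}$ satisfy
\[
\alpha_{i_{\pi(\IdxFirst)} \cdots i_{\pi(\IdxLast{k})}\, i_k\, i_{\IdxNext{k+1}} \cdots i_{\IdxLast{m}}}
\;=\;
\alpha_{i_\IdxFirst \cdots i_{\IdxLast{k}}\, i_k\, i_{\IdxNext{k+1}} \cdots i_{\IdxLast{m}}}
\]
for each fixed value of $i_k$. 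Plugging this into the summation expression for $\T{C}$ and summing term-by-term over $i_k$, I obtain $\gamma_{i_{\pi(\IdxFirst)}\cdots i_{\pi(\IdxLast{k})}\, j_k\, i_{\IdxNext{k+1}}\cdots i_{\IdxLast{m}}} = \gamma_{i_\IdxFirst \cdots i_{\IdxLast{k}}\, j_k\, i_{\IdxNext{k+1}}\cdots i_{\IdxLast{m}}}$, which is exactly the statement that $\T{C}$ is symmetric in modes $\IdxFirst$ through $\IdxLast{k}$.

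I do not anticipate a serious obstacle here; the theorem is essentially a bookkeeping statement about which index positions are touched by mode-$k$ multiplication. The only mild subtlety is notational: one must verify that the dimensions line up so that ``modes $\IdxFirst$ through $\IdxLast{k}$'' of $\T{C}$ really do have equal extents (they inherit the common dimension $I_\IdxFirst = \cdots = I_{\IdxLast{k}}$ of $\T{A}$, since the only mode whose extent changes is mode $k$, which is replaced by the row dimension of $\M{X}$). With that remark in place, the argument is a one-line application of linearity of the contraction sum combined with the partial-symmetry definition from the preceding subsections.
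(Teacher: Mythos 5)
Your proposal is correct and follows essentially the same route as the paper's proof: write out the entries of $\T{C}$ via the mode-$k$ contraction sum, observe that modes $\IdxFirst$ through $\IdxLast{k}$ are passive, and apply the partial symmetry of $\T{A}$ termwise inside the sum. Your explicit remark that symmetry in modes $\IdxFirst$ through $\IdxNext{k}$ restricts to symmetry over the subset $\{\IdxFirst,\dots,\IdxLast{k}\}$ (with the mode-$\IdxNext{k}$ index held fixed) is a small clarification the paper leaves implicit under the phrase ``under the relevent permutations,'' but the argument is otherwise identical.
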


\begin{proof}
We prove this by construction of $\T{C}$.

Since $\T{A}$ has modes $\IdxFirst$ through $\IdxNext{k}$ symmetric, we know (from Section~\ref{sec:mult_sym}) that
\[
\alpha_{\subtwo{\subfour{i_{\IdxFirst}'}{i_{\IdxSecond}'}{\cdots}{i_{\IdxNext{k}}'}}{\subthree{i_{\IdxSecondNext{k}}}{\cdots}{i_{\IdxLast{m}}}}} =
\alpha_{i_{\IdxFirst} i_{\IdxSecond} \cdots i_{\IdxNext{k}} i_{\IdxSecondNext{k}} \cdots i_{\IdxLast{m}}}
\] 
under the relevent permutations. We wish to show that
\[
\gamma_{\subtwo{\subfour{i_{\IdxFirst}'}{i_{\IdxSecond}'}{\cdots}{i_{\IdxLast{k}}'}}{\subthree{j_{\IdxNext{k}}}{\cdots}{j_{\IdxLast{m}}}}} = 
\gamma_{\subtwo{\subfour{i_{\IdxFirst}}{i_{\IdxSecond}}{\cdots}{i_{\IdxLast{k}}}}{\subthree{j_{\IdxNext{k}}}{\cdots}{j_{\IdxLast{m}}}}}
\]
for all indices in $\T{C}$.

\[
\begin{array}{l c l c l}
\gamma_{\subtwo{\subfour{i_{\IdxFirst}'}{i_{\IdxSecond}'}{\cdots}{i_{\IdxLast{k}}'}}{\subthree{j_{\IdxNext{k}}}{\cdots}{j_{\IdxLast{m}}}}} & = & \sum_{\ell=\IdxFirst}^{\IdxLast{n}} \alpha_{\subtwo{\subfour{i_{\IdxFirst}'}{i_{\IdxSecond}'}{\cdots}{i_{\IdxLast{k}}'}}{\subfour{\ell}{j_{\IdxSecondNext{k}}}{\cdots}{j_{\IdxLast{m}}}}}\chi_{j_{\IdxNext{k}}\ell} & & \\
 & = & \sum_{\ell=\IdxFirst}^{\IdxLast{n}} \alpha_{\subtwo{\subfour{i_{\IdxFirst}}{i_{\IdxSecond}}{\cdots}{i_{\IdxLast{k}}}}{\subfour{\ell}{j_{\IdxSecondNext{k}}}{\cdots}{j_{\IdxLast{m}}}}}\chi_{j_{\IdxNext{k}}\ell} & = & \gamma_{\subtwo{\subfour{i_{\IdxFirst}}{i_{\IdxSecond}}{\cdots}{i_{\IdxLast{k}}}}{\subthree{j_{\IdxNext{k}}}{\cdots}{j_{\IdxLast{m}}}}}. \\
\end{array}
\]
Since
$\gamma_{\subtwo{\subfour{i_{\IdxFirst}'}{i_{\IdxSecond}'}{\cdots}{i_{\IdxLast{k}}'}}{\subthree{j_{\IdxNext{k}}}{\cdots}{j_{\IdxLast{m}}}}}
=
\gamma_{\subtwo{\subfour{i_{\IdxFirst}}{i_{\IdxSecond}}{\cdots}{i_{\IdxLast{k}}}}{\subthree{j_{\IdxNext{k}}}{\cdots}{j_{\IdxLast{m}}}}}$
for all indices in $\T{C}$, we can say that $\T{C}$ has modes
$\IdxFirst$ through $\IdxLast{k}$ symmetric.
\end{proof}
 
By applying \Thm{psymTemp} to the algorithm in \Fig{mD}, we observe that all temporaries of the form $\T{T}^{(k)}$ formed have modes
$\IdxFirst$ through $\IdxLast{k}$ symmetric.  It is this
partial symmetry we exploit to further reduce storage and computational
complexity\footnote{It is true that the other modes \emph{may} have symmetry as well, however in general this is not the case, and therefore we do not explore exploiting this symmetry}.

\subsection{Storage}
A generalization of the \BCSSshort scheme can be applied to the
partially symmetric temporary tensors as well.  To do this, we view each
temporary tensor as being comprised of a group of symmetric modes and a group
of non-symmetric modes.  There is once again opportunity for storage savings as
the symmetric indices have redundancies.  As in the \BCSSshort case for
symmetric tensors, unique blocks are stored and meta-data indicating how to
transform stored blocks to the corresponding block is stored for all redundant
block entries.

\subsection{Computation}
Recall that each temporary is computed via \newline $\T{T}^{(k)} = \T{T}^{(k+1)}
\times_{k+1} \M{B}$ where $\T{T}^{(k)}$ and $\T{T}^{(k+1)}$ have associated
symmetries ($\T{T}^{\IdxNext{(m)}} =
\T{A}$ when computing $\T{T}^{(\IdxLast{m})}$), and $\M{B}$ is some matrix.  We can rewrite this operation as:

\[
\begin{array}{l c l}
\T{T}^{(k)} & = & \T{T}^{(k+1)} \times_{\IdxSecondNext{k}} \M{B} = \T{T}^{(k+1)} \times_{\IdxFirst} \M{I} \times_{\IdxSecond} \cdots \times_{\IdxNext{k}} \M{I} \times_{\IdxSecondNext{k}} \M{B} \times_{\IdxThirdNext{k}} \M{I} \times_{\IdxFourthNext{k}} \cdots \times_{\IdxLast{m}} \M{I}, \\ 
 \end{array}
 \]
 
 \noindent where $\M{I}$ is the first $p$ rows of the $n \times n$ identity
 matrix.
 An algorithm akin to that of \Fig{mD} can be created (care is taken to only update unique output blocks) to
 perform the necessary computation.  Of course, computing with the identity
 matrix is wasteful, and therefore, we only implicitly compute with the identity
 matrix to save on computation.
 
 \newpage
 \subsection{Analysis}
\begin{table}[ht!]
\scriptsize
\centering
\begin{tabular}{| c | c | c |}
\hline
 & \BCSSshort & \Dense \\ \hline
 \begin{tabular}{@{}c@{}}$\T{A}$ \\ (elements)\end{tabular} & $\blkDim{\T{A}}^m\binom{\blkedDimA + m - 1}{m} + \blkedDimA^m$ & $n^m$ \\\hline
 \begin{tabular}{@{}c@{}}$\T{C}$ \\ (elements)\end{tabular} & $\blkDim{\T{C}}^m\binom{\blkedDimC + m - 1}{m} + \blkedDimC^m$ & $p^m$ \\\hline
 \begin{tabular}{@{}c@{}}$\M{X}$ \\ (elements)\end{tabular} & $pn$ & $pn$ \\\hline
 \begin{tabular}{@{}c@{}}All temporaries \\ (elements)\end{tabular} & $\blkDim{\T{C}}\blkDim{\T{A}}^{m-1}\sum_{d=0}^{m-2}\left(\binom{\blkedDimA + m - d - 2}{m - d - 1}\left(\frac{\blkDim{\T{C}}}{\blkDim{\T{A}}}\right)^{d} + \blkedDimA^{d+1} \right)$ & $pn^{m-1}\sum_{d=0}^{m-2}\left(\frac{p}{n}\right)^{d}$ \\\hline
 \begin{tabular}{@{}c@{}}Computation \\ (flops)\end{tabular} & $2\blkedDimA\blkDim{\T{C}}\blkDim{\T{A}}^{m}\sum_{d=0}^{m-1}\binom{\blkedDimC+d}{d+1}\binom{\blkedDimA + m - d - 2}{m - d - 1}\left(\frac{\blkDim{\T{C}}}{\blkDim{\T{A}}}\right)^{d}$ & $2pn^m\sum_{d=0}^{m-1}\left(\frac{p}{n}\right)^{d}$ \\\hline
 \begin{tabular}{@{}c@{}}Permutation \\ (memops)\end{tabular} & $\left(\blkedDimA + 2\frac{\blkDim{\T{C}}}{\blkDim{\T{A}}}\right)\blkDim{\T{A}}^m\sum_{d=0}^{m-1}\binom{\blkedDimC+d}{d+1}\binom{\blkedDimA + m - d - 2}{m - d - 1}\left(\frac{\blkDim{\T{C}}}{\blkDim{\T{A}}}\right)^{d}$ & $\left(1+2\frac{p}{n}\right)n^m\sum_{d=0}^{m-1}\left(\frac{p}{n}\right)^d$\\\hline 
\end{tabular}

\caption{Costs associated with different algorithms for computing $\T{C} = [\T{A}; \M{X}, \ldots, \M{X}]$.  The \BCSSshort column takes advantage of partial-symmetry within the temporaries.  The $\blkedDimA^{m}$, $\blkedDimC^{m}$, and $\blkedDimA^{d+1}$ terms correspond to the number of meta-data elements associated with our choice of storage scheme.}
\label{tbl:MinCosts}
\end{table}
 
 Utilizing this optimization allows us to arrive at the final cost functions for
 storage and computation shown in \Tbl{MinCosts}.
 
 Taking, for example, the computational cost (assuming $n=p$ and $\blkDim{\T{A}}
 = \blkDim{\T{C}}$), we have the following expression for the cost of the \BCSSshort
 algorithm:
 
 \[
 \begin{array}{lcl}
 \lefteqn{2\blkedDimA\blkDim{\T{C}}\blkDim{\T{A}}^{m}\sum_{d=0}^{m-1}\binom{\blkedDimC+d}{d+1}\binom{\blkedDimA + m - d - 2}{m - d - 1}\left(\frac{\blkDim{\T{C}}}{\blkDim{\T{A}}}\right)^{d}} \\
 & = & 2n\blkDim{\T{A}}^{m}\sum_{d=0}^{m-1}\binom{\blkedDimA+d}{d+1}\binom{\blkedDimA + m - (d + 1) -1}{m - (d + 1)}\left(1\right)^{d} \\
  & & \\
 & \approx & 2n\blkDim{\T{A}}^{m}\binom{2\blkedDimA + m - 2}{m} \approx 2n\blkDim{\T{A}}^{m}\frac{\left(2\blkedDimA\right)^m}{m!} = \frac{\left(2n\right)^{m+1}}{m!}. \\
 \end{array}
 \]
 To achieve this approximation, the Vandermonde identity, which states that
\[
\binom{m + n}{r} = \sum_{k=0}^{r}\binom{m}{k}\binom{n}{r-k}
\]
was employed.  
\begin{table}[tb!]
\centering
\begin{tabular}{| c | c | c |}
\hline
 & \BCSSshort & \Dense \\ \hline
 \begin{tabular}{@{}c@{}}$\T{A}$, $\T{C}$ \\ (elements)\end{tabular} & $\blkDim{\T{A}}^m\binom{\blkedDimA + m - 1}{m}$ & $n^m$ \\\hline

 \begin{tabular}{@{}c@{}}$\M{X}$ \\ (elements)\end{tabular} & $n^2$ & $n^2$ \\\hline
 \begin{tabular}{@{}c@{}}All temporaries \\ (elements)\end{tabular} & $\frac{n^m}{m!}$ & $(m-1)n^m$ \\\hline
 \begin{tabular}{@{}c@{}}Computation \\ (flops)\end{tabular} & $\frac{(2n)^{m+1}}{m!}$ & $2mn^{m+1}$ \\\hline
 \begin{tabular}{@{}c@{}}Permutation \\ (memops)\end{tabular} & $(\blkedDimA + 2)\frac{(2n)^{m}}{m!}$ & $3mn^m$\\\hline 
\end{tabular}

\caption{Approximate costs associated with different algorithms for computing $\T{C} = [\T{A}; \M{X}, \ldots, \M{X}]$.  The \BCSSshort column takes advantage of partial symmetry within the temporaries.  In the above costs, it is assumed that the tensor dimensions and block dimensions of $\T{A}$ and $\T{C}$ are equal, i.e. $n=p$ and $\blkDim{\T{A}} = \blkDim{\T{C}}$.  We assume $n^m >> \blkedDimA^m$.}
\label{tbl:LimitMinCosts}
\end{table}
Using similar approximations, we arrive at the estimates summarized in
 \Tbl{LimitMinCosts}.
 
Comparing this computational cost to that of the \Dense algorithm (as given in \Tbl{LimitMinCosts}), we see that the \BCSSshort algorithm achieves a reduction of 

\[
\frac{\text{\Dense cost}}{\text{\BCSSshort cost}} = \frac{2mn^{m+1}}{\left(\frac{(2n)^{m+1}}{m!}\right)} \approx \frac{(m+1)!}{2^m} \text{ \fromtoMartin{flops}{}} 
\]
in terms of computation.

 \subsection{Analysis relative to minimum}
 As we are storing some elements redundantly, it is important to compare how the
 algorithm performs compared to the case where we store no extra elements, that
 is, we only compute the unique entries of $\T{C}$.  Assuming $\T{A} \in
 \tensz{m}{n}$, $\T{C} \in \tensz{m}{p}$, $p=n$, and $\blkDim{\T{A}} =
 \blkDim{\T{C}}=1$, the cost of computing the {\tt sttsm} operation is
 
 \[
 2n\sum_{d=0}^{m-1}\binom{n + d}{d + 1}\binom{n + m - d - 2}{m - d - 1} \approx 2n\binom{2n+m-2}{m} \approx 2n\frac{(2n)^{m}}{m!} = \frac{(2n)^{m+1}}{m!}, 
 \]
which is of the same order as our blocked algorithm.

\subsection{Summary}
 
\begin{figure}[tbp!]
\centering
\begin{tabular}{@{\hspace{-.05in}} c @{\hspace{-0.1in}} c @{\hspace{-0.1in}} c}
\includegraphics[width=1.8in]{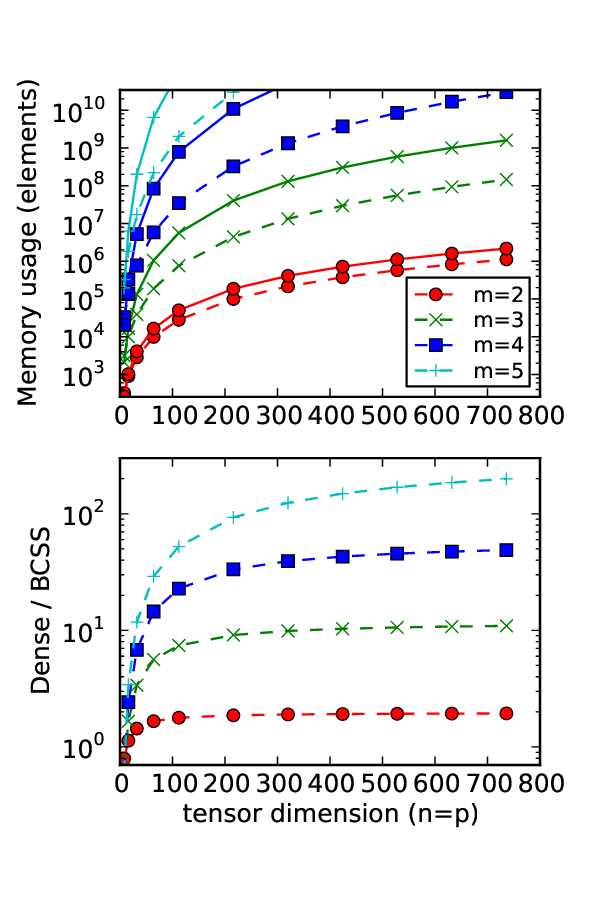} & 
\includegraphics[width=1.8in]{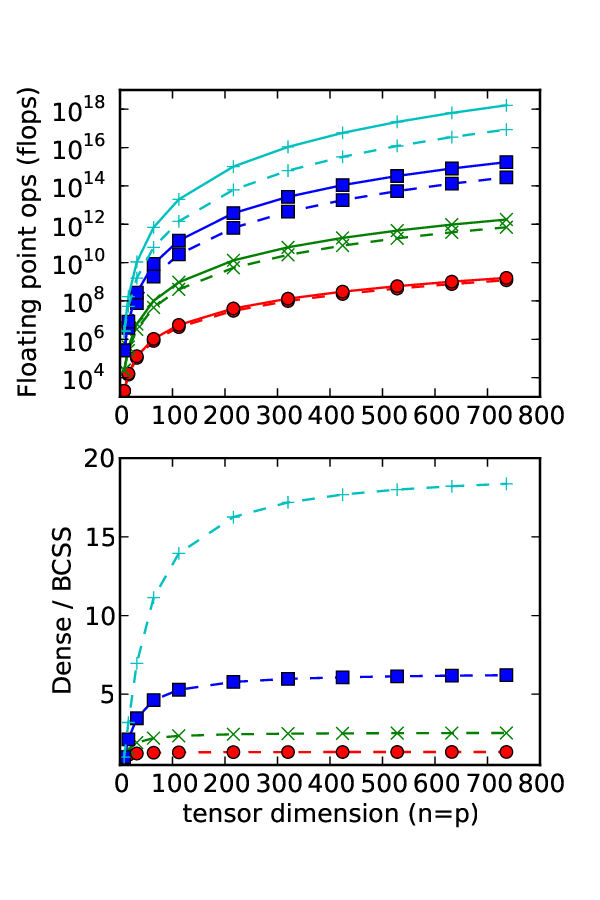} & 
\includegraphics[width=1.8in]{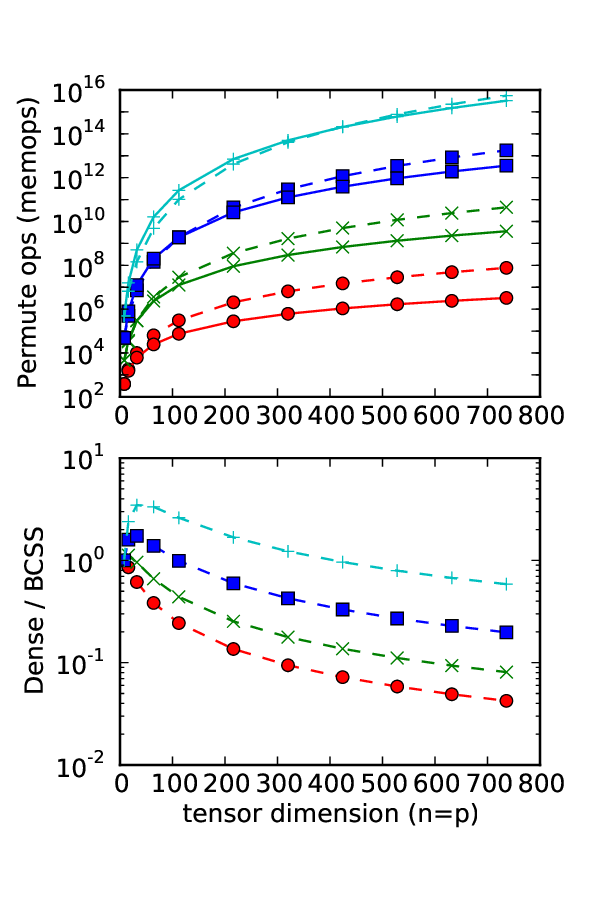} \\
\end{tabular}

\vspace{-0.2in}
 
\caption{Comparison of \Dense to \BCSSshort algorithms for fixed block size. Solid and dashed lines correspond to \Dense and \BCSSshort, respectively. From left to right: 
storage requirements, cost from computation (flops), 
cost from permutations (memops).  For these graphs $\blkDim{\T{A}}=\blkDim{\T{C}}=8$.}
\label{fig:dense_compact_comparison}

\vspace{0.1in}

\centering
\begin{tabular}{@{\hspace{-.05in}} c @{\hspace{-0.1in}} c @{\hspace{-0.1in}} c}
\includegraphics[width=1.8in]{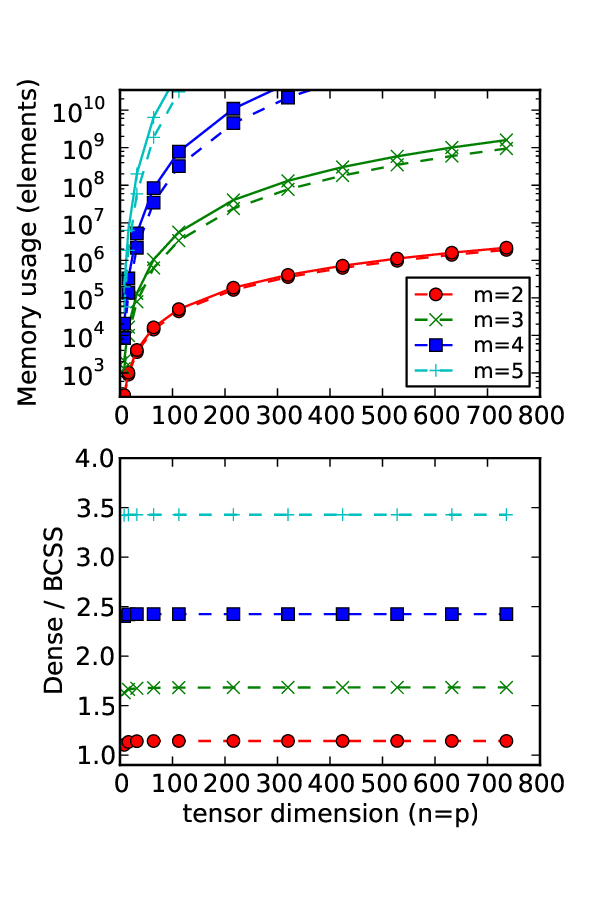} & 
\includegraphics[width=1.8in]{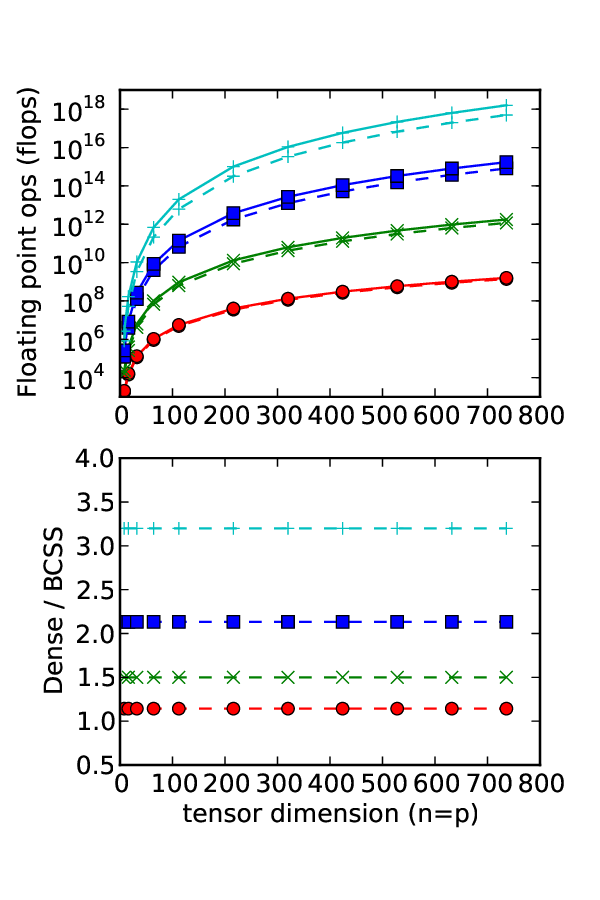} & 
\includegraphics[width=1.8in]{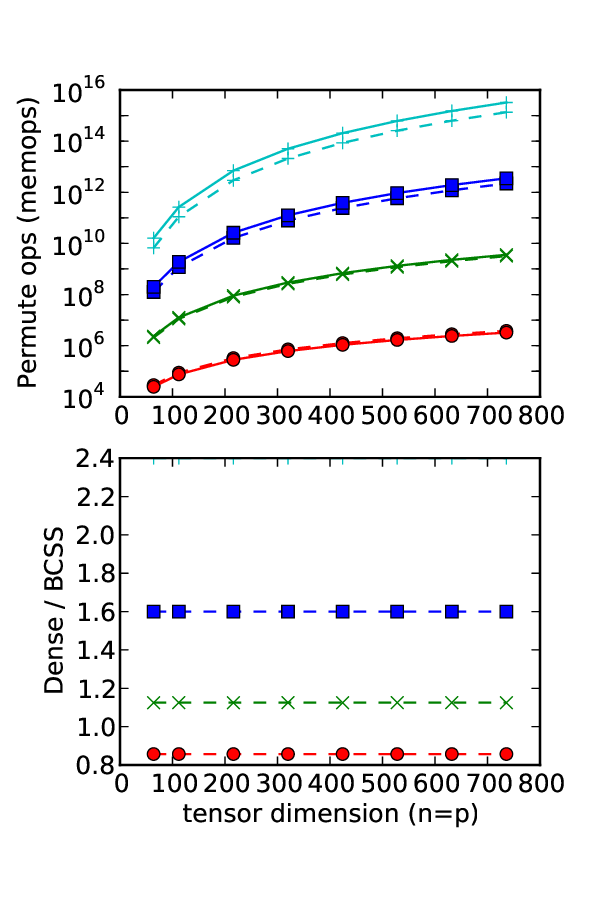} \\
\end{tabular} 

\vspace{-0.2in}

\caption{Comparison of \Dense to \BCSSshort algorithms for fixed number of blocks. Solid and dashed lines correspond to \Dense and \BCSSshort, respectively. From left to right: 
storage requirements, cost from computation (flops), 
cost from permutations (memops).  Here $\blkedDimA=\blkedDimC=2$.}
\label{fig:dense_compact_comparison_p2}
\end{figure}

Figures~\ref{fig:dense_compact_comparison}--\ref{fig:dense_compact_comparison_p2}
illustrate the insights discussed in this section.
The (exact) formulae developed for storage, flops, and memops are used to
compare and contrast dense storage with \BCSSshort.

In Figure~\ref{fig:dense_compact_comparison}, the top graphs report storage, flops, and memops (due to permutations) 
as a function of tensor dimension ($n$), for different tensor orders ($m$), 
for the case where the storage block size is relatively small
($b_{\T{A}} = b_{\T{C}} = 8 $).  The bottom graphs report the same 
information, but as a ratio.
 The graphs illustrate that \BCSSshort dramatically reduces the
 required storage and the proposed algorithms reduce the flops requirements for the {\tt sttsm}
operation, at the expense of additional memops due to the encountered
permutations.
 
In Figure~\ref{fig:dense_compact_comparison_p2}, a similar analysis is
given, but for the case where the block size is half the tensor
dimension (i.e., $ \bar n = \bar p = 2 $).  It shows that the memops can be
greatly reduced by increasing the storage block dimensions, but this then
adversely affects the storage and computational benefits.

It would be tempting to discuss how to choose an optimal block dimension. 
However, the real issue is that the overhead of permutation should be reduced
and/or eliminated.  Once that is achieved, in future research, the question of
how to choose the block size becomes relevant.

\section{Experimental Results}
In this section, we report on the performance attained by an implementation of
the discussed approach.
It is important to keep in mind that the current state of the art of tensor
computations is first and foremost concerned with reducing memory requirements
so that reasonably large problems can be executed.  This is where taking
advantage of symmetry is important.  With that said, another primary concern
 is ensuring the overall time of computation is reduced.  To achieve this, a
 reduction in the number of floating point operations as well as an
 implementation that computes the necessary operations efficiently are both desired.
Second to that is the desire to reduce the number of floating
point operations to the minimum required. The provided analysis shows that our
algorithms perform the minimum number of floating point operations (under approximation).  Although
our algorithms do not yet perform these operations efficiently, our results show
that we are still able to reduce the computation time (in some cases
significantly).

\subsection{Target architecture}

We report on experiments on a single core of a Dell PowerEdge R900 server
consisting of four six-core Intel Xeon 7400 processors and 96 GBytes of memory.
Performance experiments were gathered under the GNU/Linux 2.6.18 operating
system. Source code was compiled by the GNU C compiler, version 4.1.2. All
experiments were performed in double-precision floating-point arithmetic on
randomized real domain matrices and tensors.
The implementations were linked to the OpenBLAS 0.1.1
library~\cite{OpenBLAS,OpenBLASpaper}, a fork of the GotoBLAS2 implementation of
the BLAS~\cite{Goto:2008:AHP,1377607}.
As noted, most time is spent in the permutations necessary to cast computation
in terms of the BLAS matrix-matrix multiplication routine {\tt dgemm}.  Thus,
the peak performance of the processor and the details of the performance
attained by the BLAS library are mostly irrelevant at this stage.
The experiments merely show that the new approach to storing matrices as well as
the algorithm that takes advantage of symmetry has promise, rather than making a
statement about optimality of the implementation.
For instance, as argued previously, we know that tensor permutations can
dominate the time spent computing the {\tt sttsm} operation.
These experiments make no attempt to reduce the number of tensor permutations
required when computing a single block of the output.  Algorithms reducing the
effect of tensor permutations have been specialized for certain tensor
operations and have been shown to greatly increase the performance of routines
using them~\cite{Phan:2012:ComputationCP,ToBr09}. Much room for
improvement remains.

\subsection{Implementation}

The implementation was coded in a style inspired by the {\tt libflame}
library~\cite{libflame_ref,CiSE09} \fromtoMartin{}{and can be found in the
Google Code {\tt tlash} project (\url{code.google.com/p/tlash})}.  An API similar
to the FLASH API~\cite{FLAWN12} for storing matrices as matrices of blocks and
implementing algorithm-by-blocks was defined and implemented.
Computations with the (tensor and matrix) blocks were implemented as the
discussed sequence of permutations interleaved with calls to the {\tt dgemm}
BLAS kernel.  No attempt was yet made to optimize these permutations.  However,
an apples-to-apples comparison resulted from using the same sequence of
permutations and calls to {\tt dgemm} for both the experiments that take
advantage of symmetry and those that store and compute with the tensor densely,
ignoring symmetry.

\begin{figure}[t!]
\begin{tabular}{@{} c @{\hspace{-0.85in}} c @{\hspace{-0.85in}} c}
\includegraphics[width=2.5in]{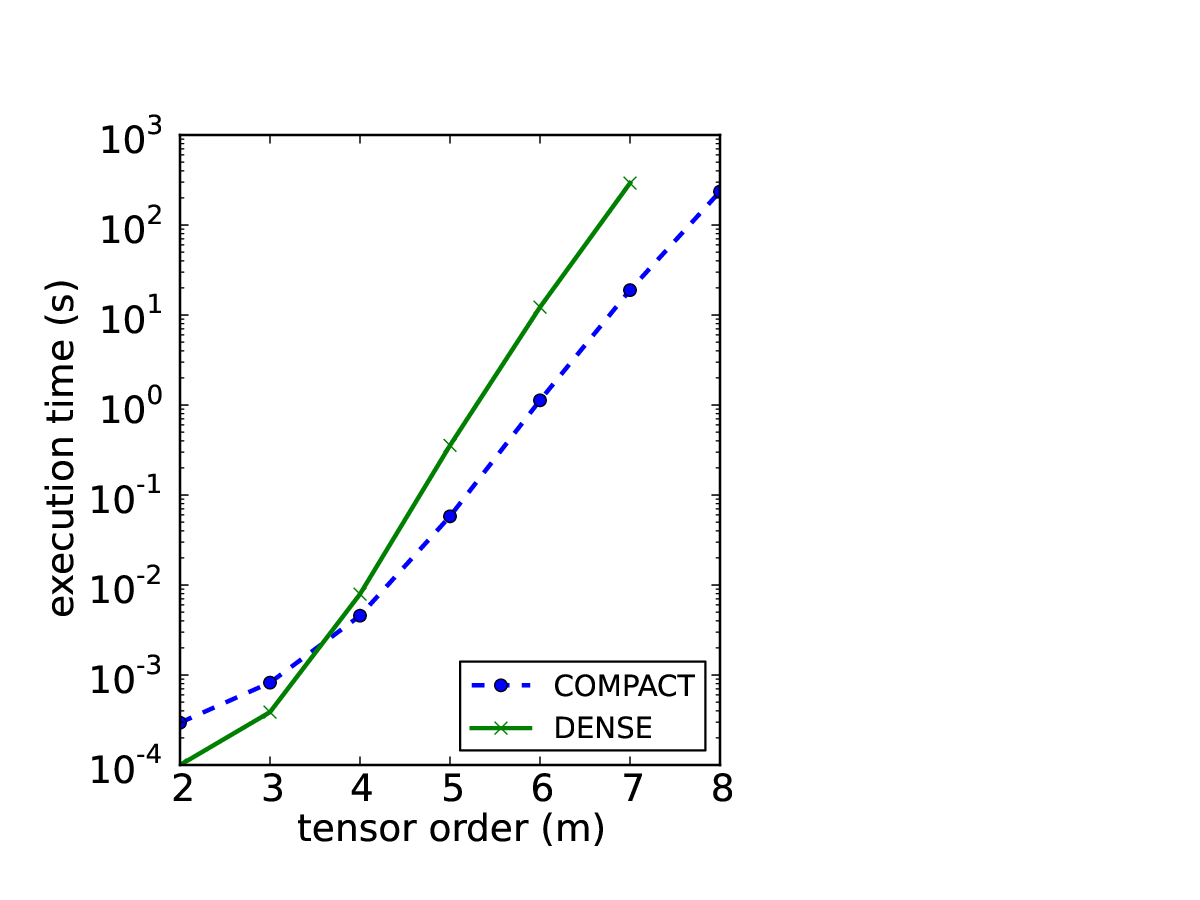} 
& \includegraphics[width=2.5in]{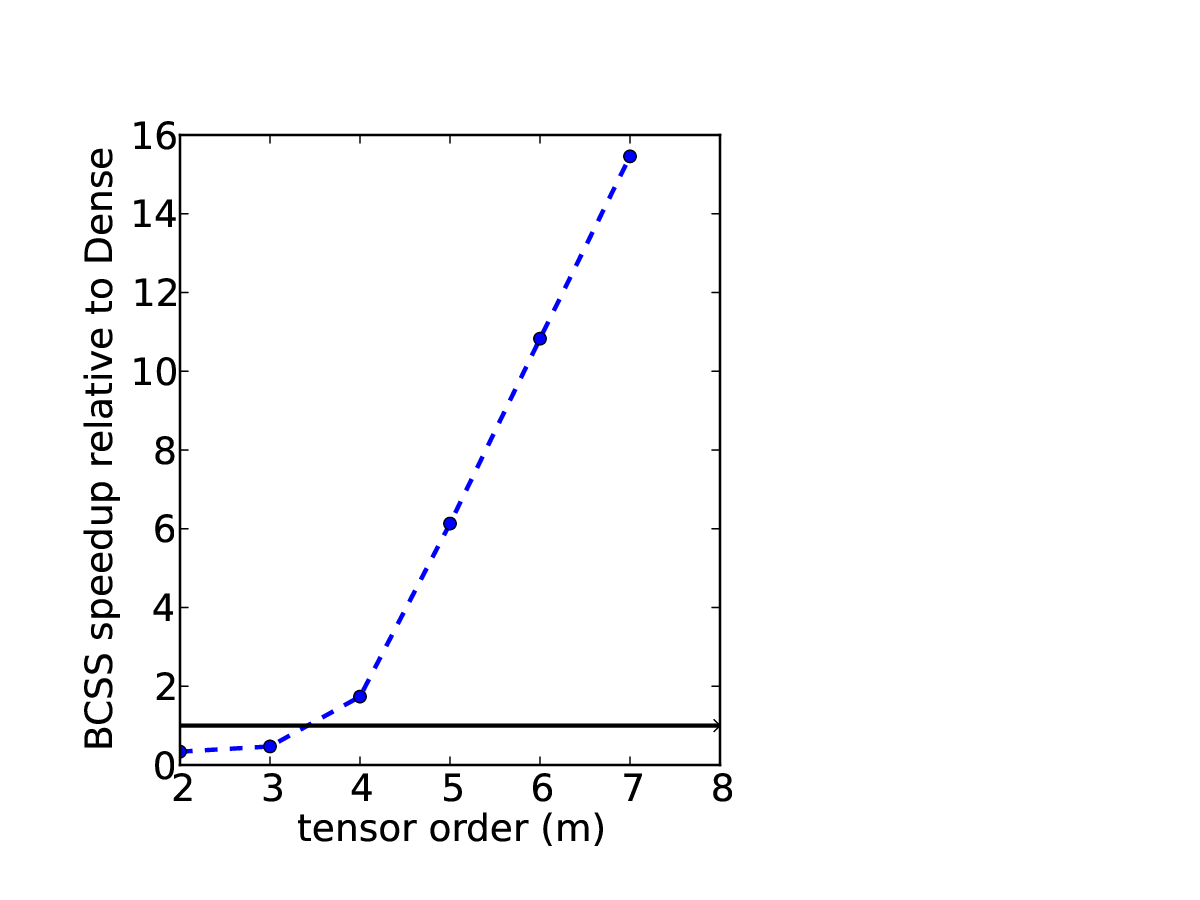} 
& 
\includegraphics[width=2.5in]{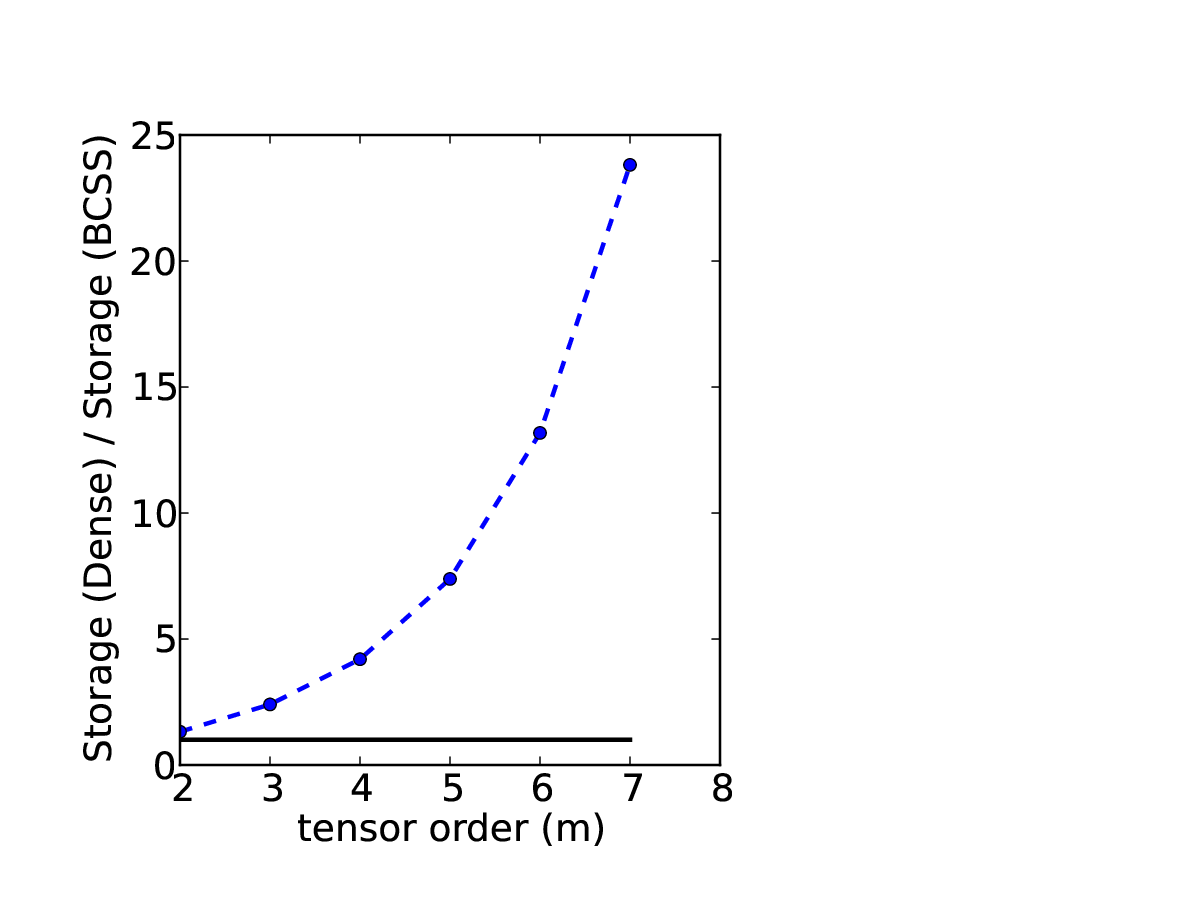}
\end{tabular}
\caption{Experimental results when $ n = p = 16 $, $ b_{\T{A}} = b_{\T{C}} = 8 $ and 
the tensor order, $ m $, is varied.  For $ m = 8 $, storing $ \T{A} $ and $ \T{C} $ without 
taking advantage of symmetry requires too much memory.  The solid black line is used to indicate a unit ratio.}
\label{fig:varyMTests}

%
%
%
%
%
%
%
%
%
%
\end{figure}
\begin{figure}[t!]
\begin{tabular}{@{} c @{\hspace{-0.85in}} c @{\hspace{-0.85in}} c}
\includegraphics[width=2.5in]{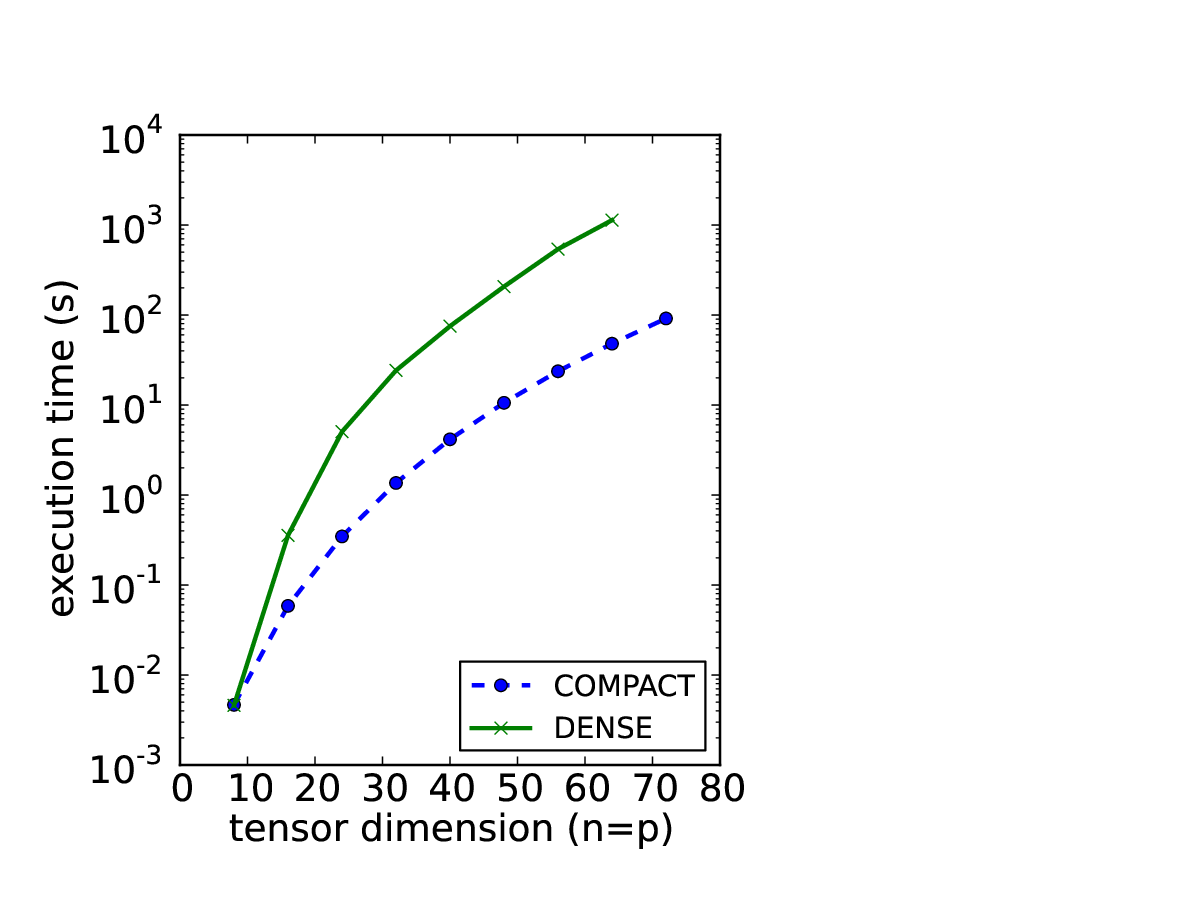} 
& \includegraphics[width=2.5in]{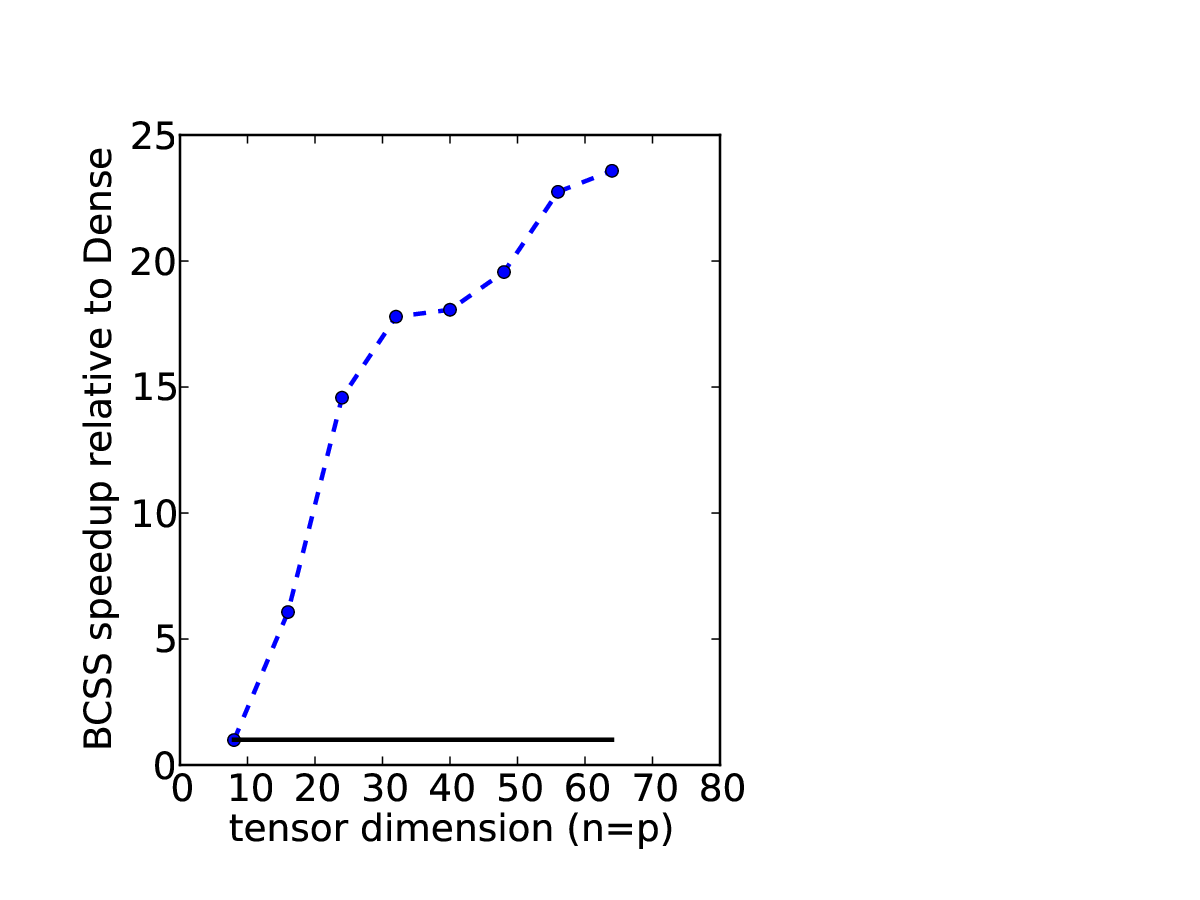} 
&
\includegraphics[width=2.5in]{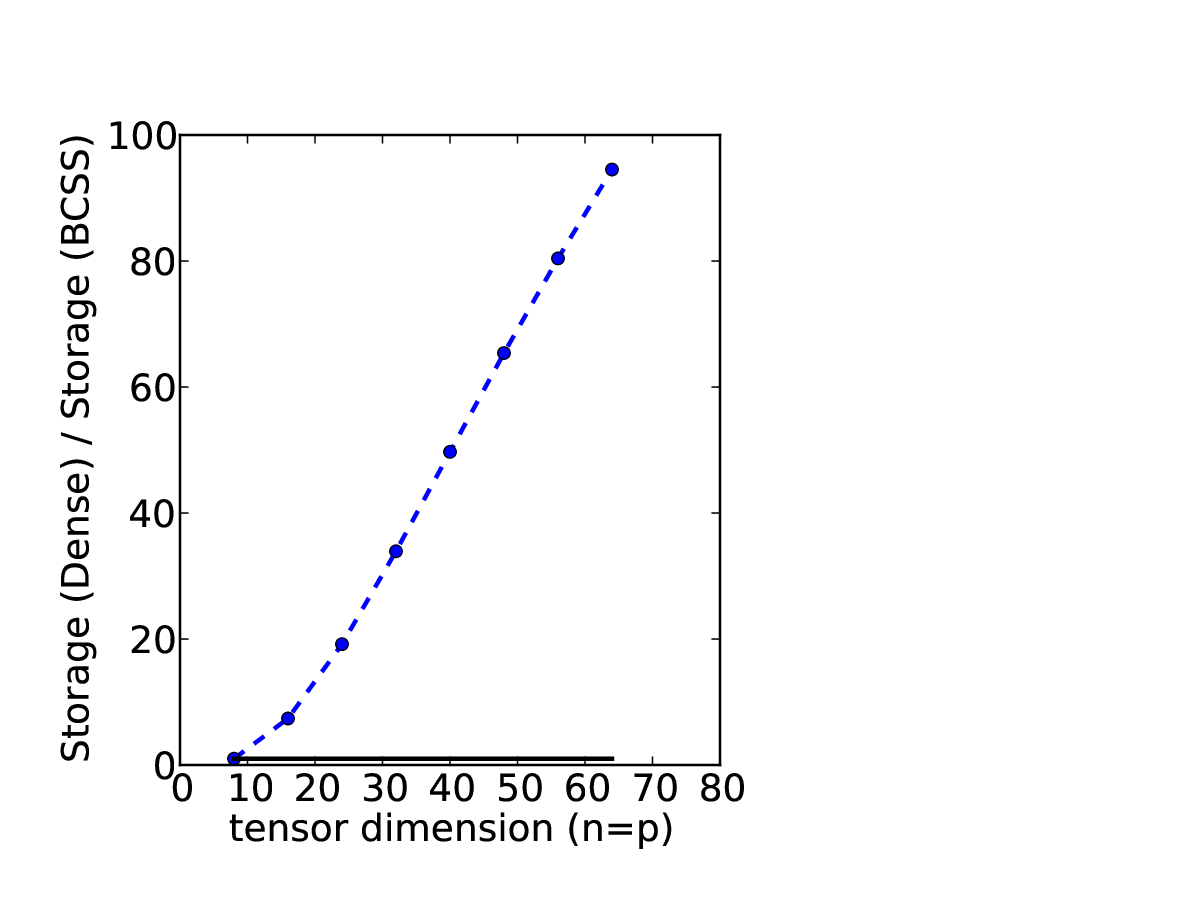}
\end{tabular}
\caption{Experimental results when the order $ m = 5 $, $ b_{\T{A}} = b_{\T{C}} = 8 $ and 
the tensor dimensions $ n = p $ are varied.  For $n=p=72$, storing $\T{A}$ and $\T{C}$ without taking advantage of symmetry requires too much memory.  The solid black line is used to indicate a unit ratio.}
\label{fig:varyNTests}
\vspace{0.3in}
\begin{tabular}{@{} c @{\hspace{-0.85in}} c @{\hspace{-0.85in}} c}
\includegraphics[width=2.5in]{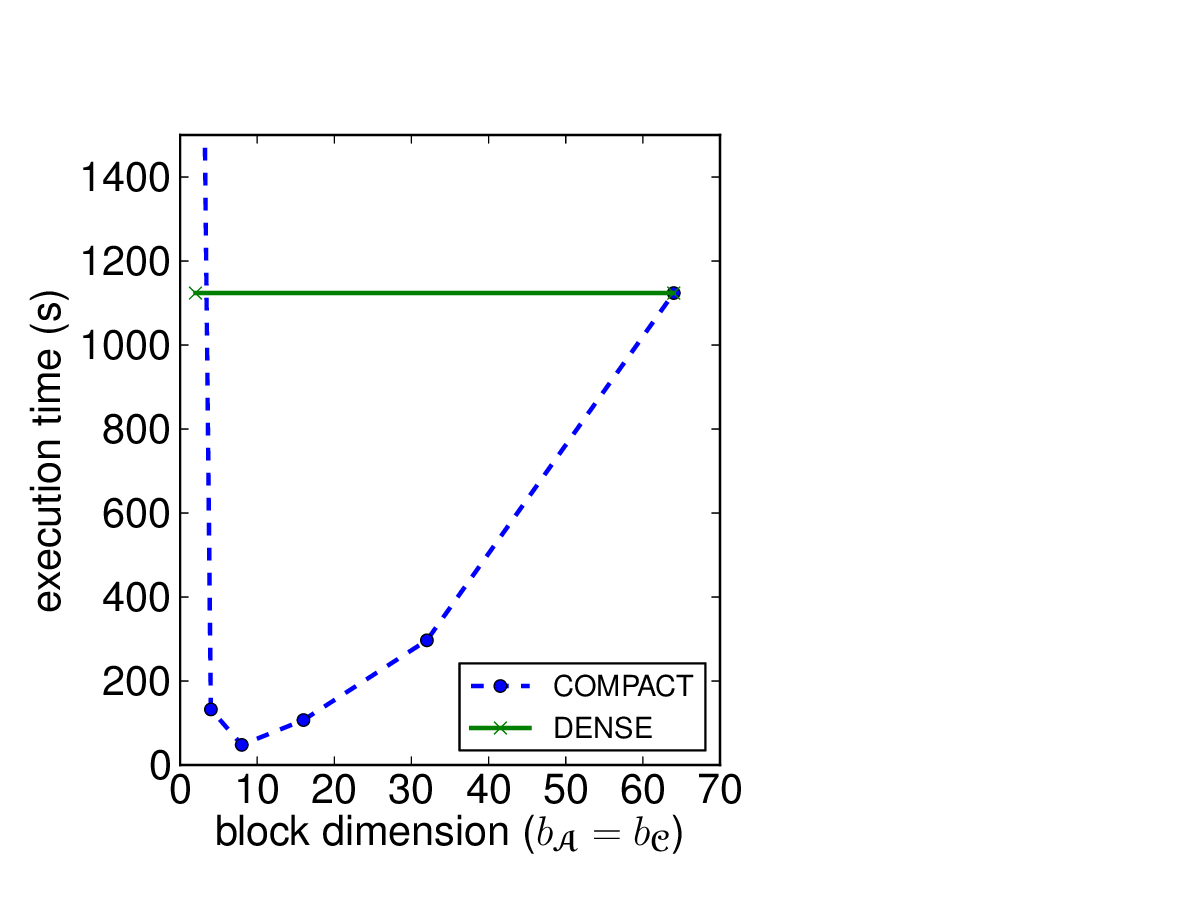} 
& \includegraphics[width=2.5in]{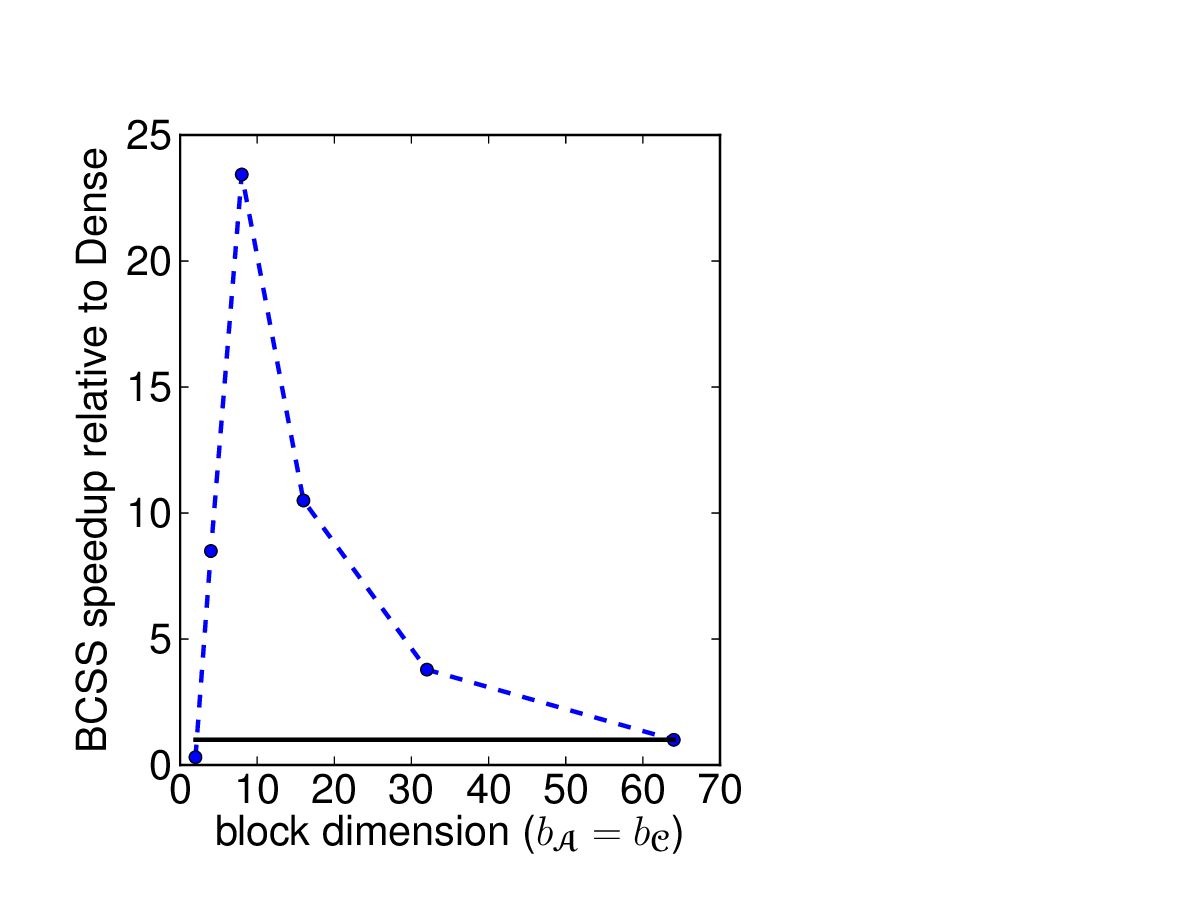} &
\includegraphics[width=2.5in]{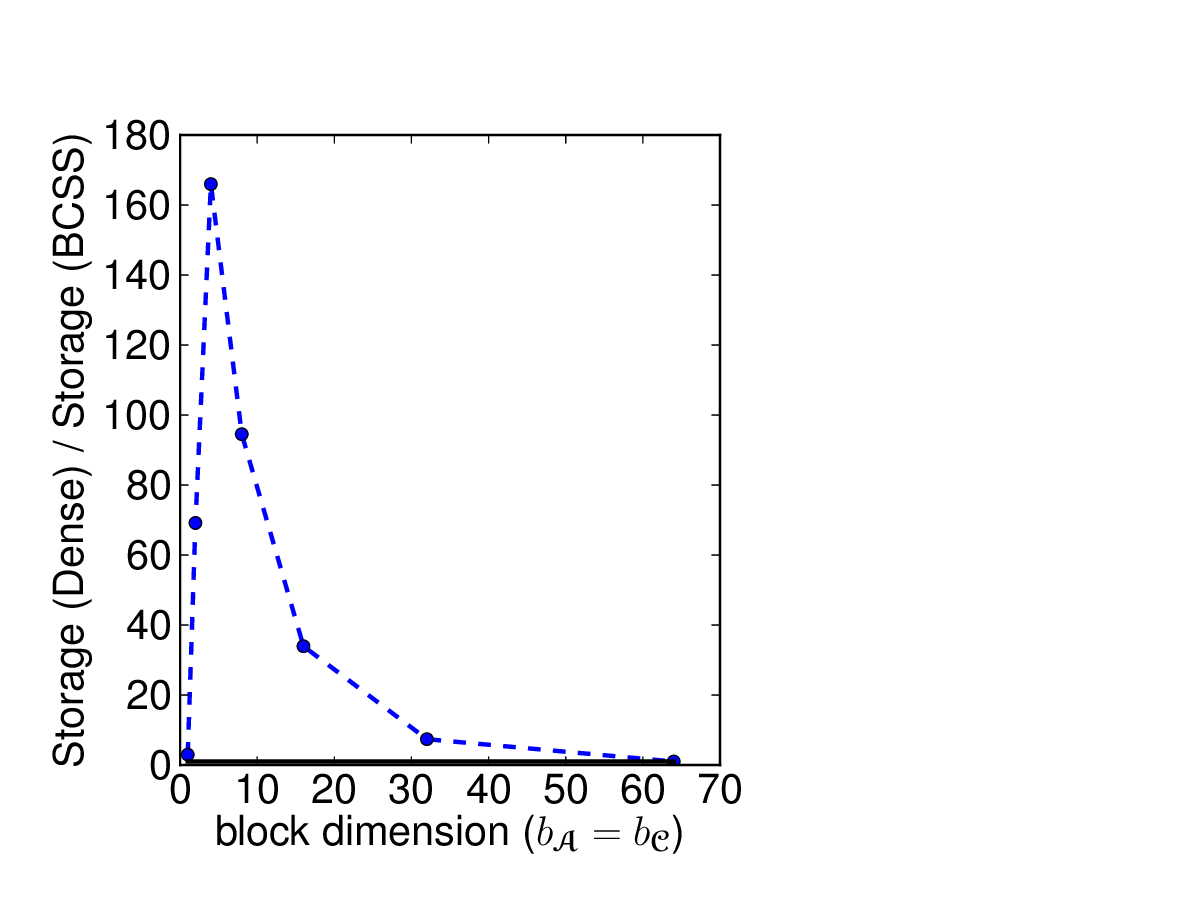}
\end{tabular}
\caption{Experimental results when the order $ m = 5 $, $ n = p = 64 $ and 
the block dimensions $ b_{\T{A}} = b_{\T{C}} $ are varied.  The solid black line is used to indicate a unit ratio.}
\label{fig:varyBTests}
\vspace{0.3in}
\end{figure}
\newpage

\begin{figure}[t!]
\begin{tabular}{@{} c @{\hspace{-0.85in}} c @{\hspace{-0.85in}} c}
\includegraphics[width=2.5in]{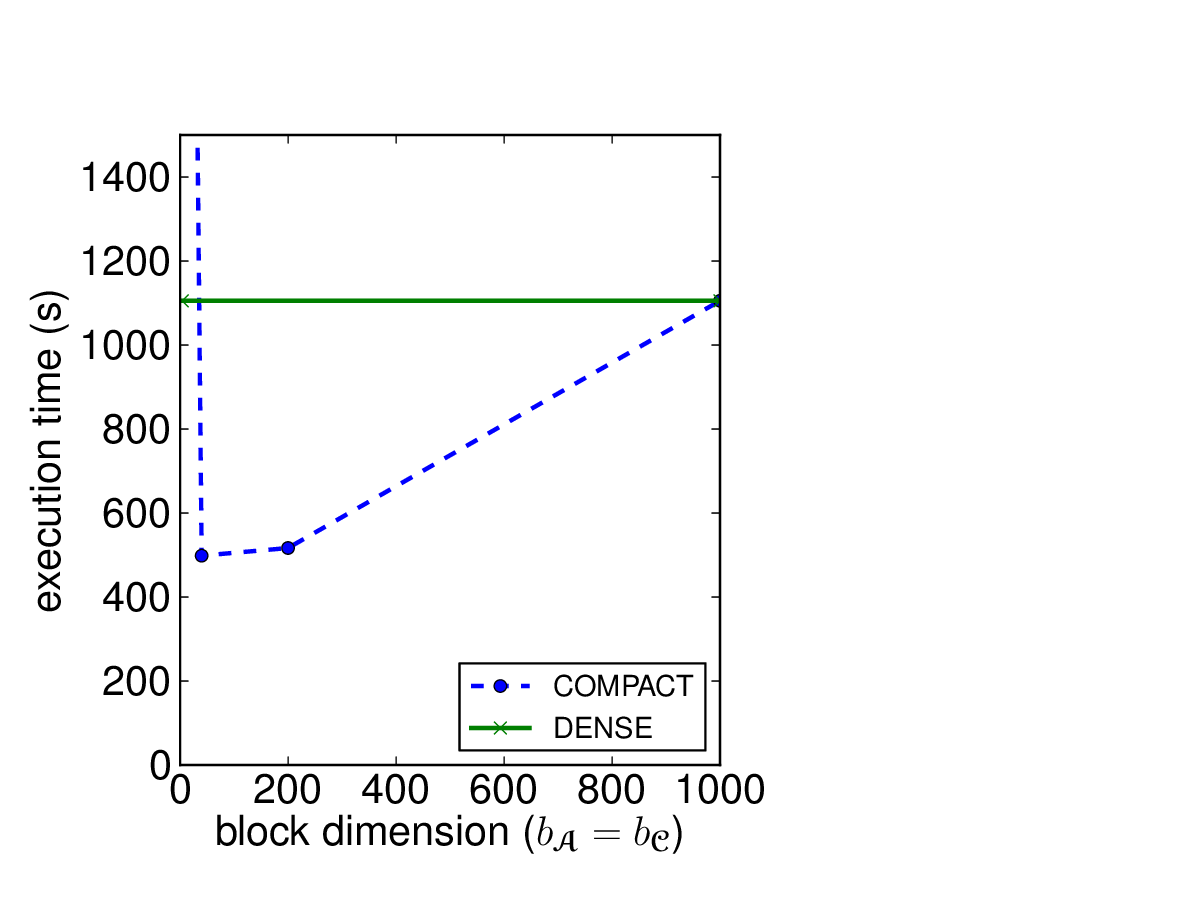} 
& \includegraphics[width=2.5in]{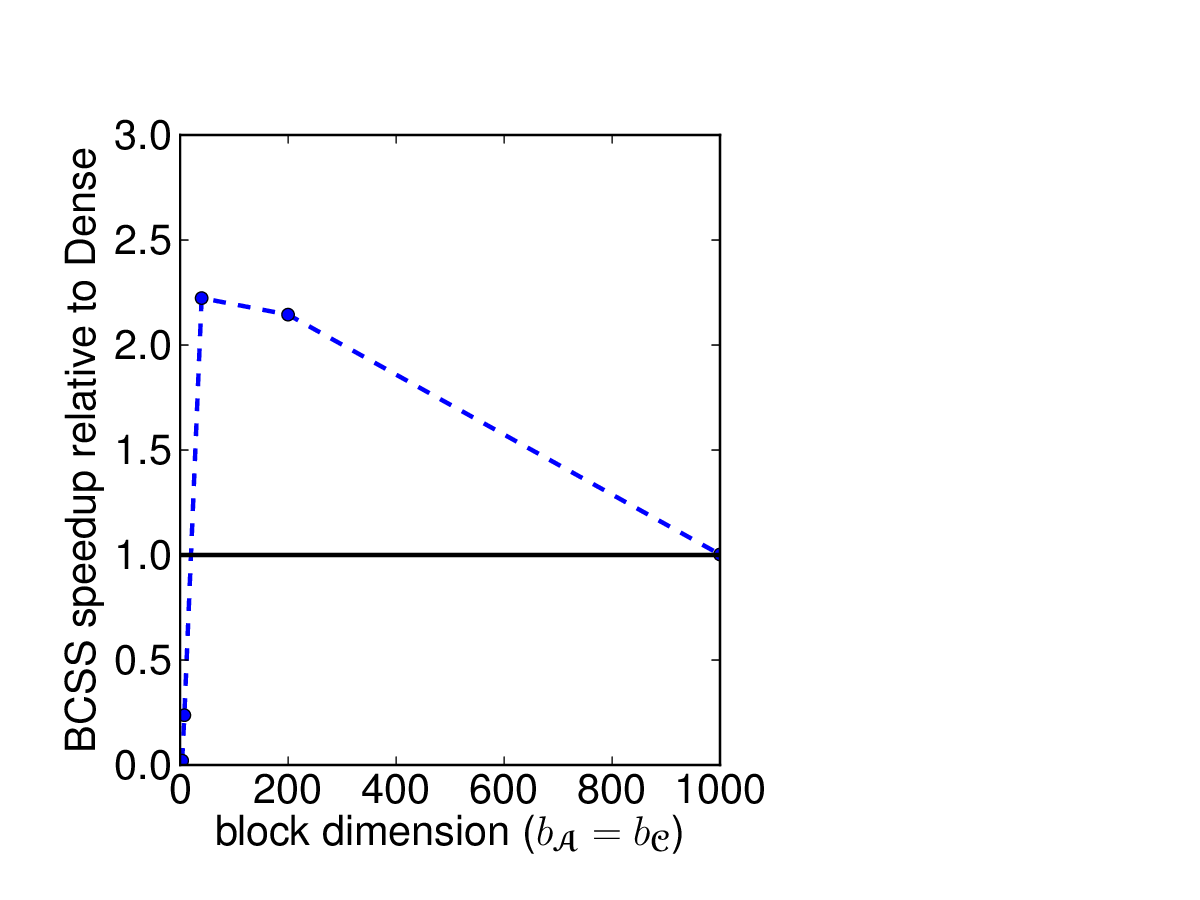} &
\includegraphics[width=2.5in]{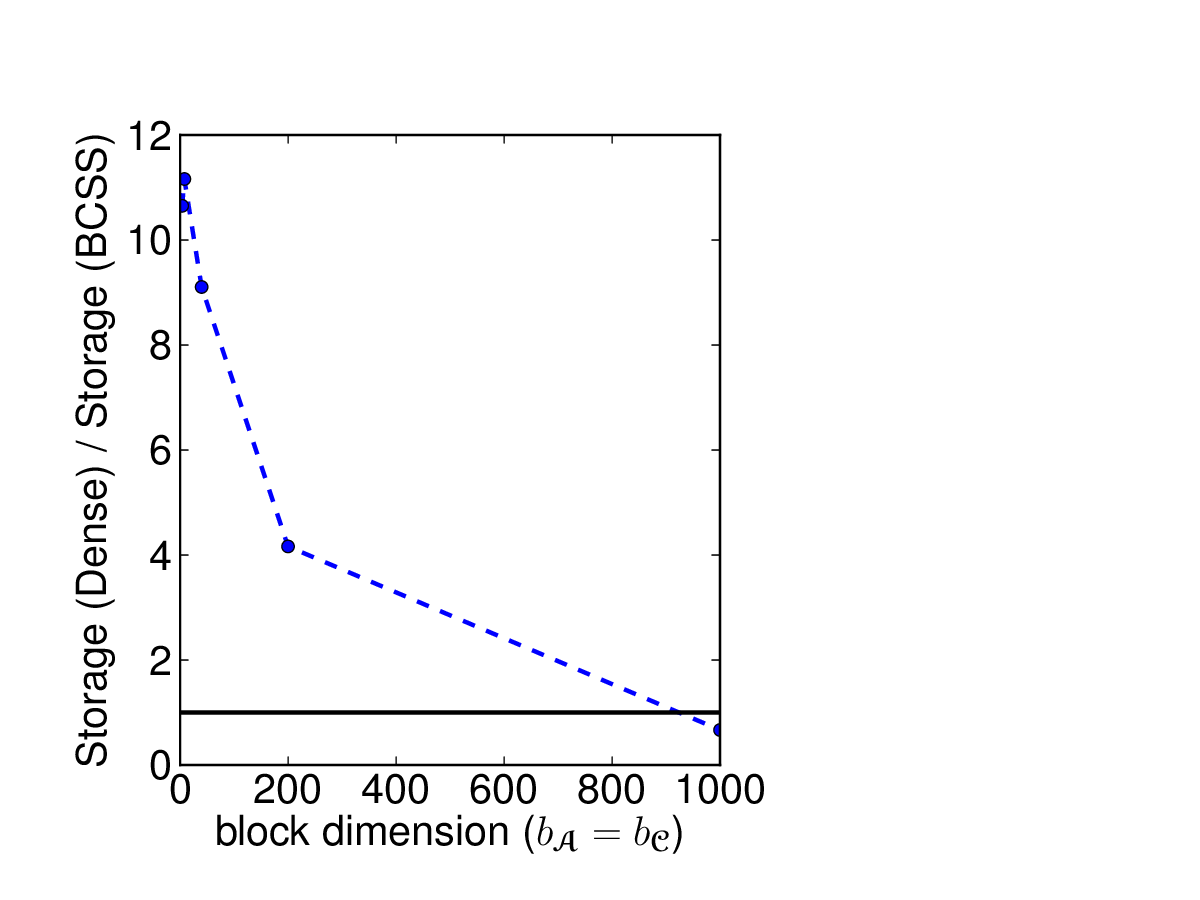}
\end{tabular}
\caption{Experimental results when the order $ m = 3 $, $ n = p = 1000 $ and 
the block dimensions $ b_{\T{A}} = b_{\T{C}} $ are varied.  The solid black line is used to indicate a unit ratio.}
\label{fig:varyBOrder3Tests}
\end{figure}

\begin{figure}[h!]
\centering
\includegraphics[width=3.0in]{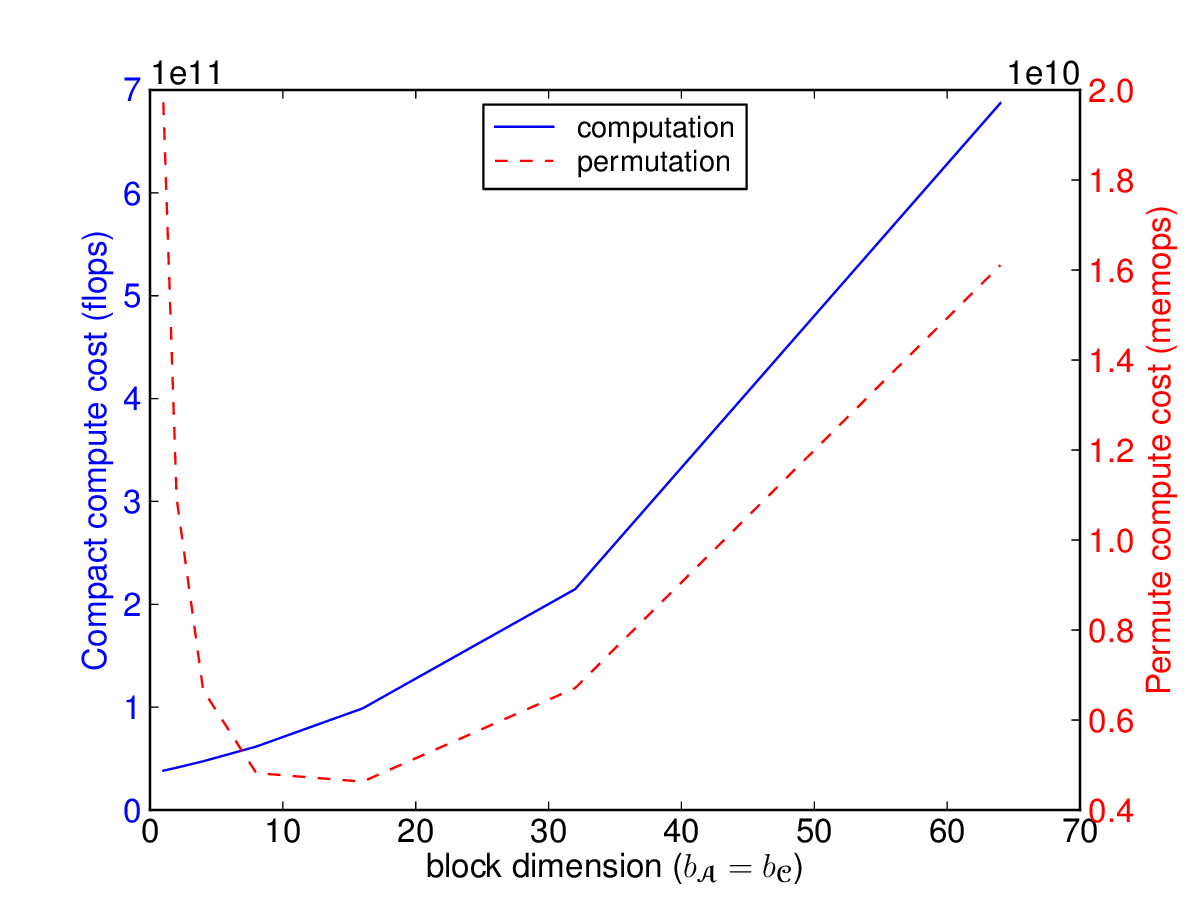}
\caption{Comparison of computational cost to permutation cost where $m=5$, $n = p = 64$ and tensor block dimension ($\blkDim{\T{A}}$, $\blkDim{\T{C}}$) is varied. The solid line represents the number of flops (due to computation) required for a given problem (left axis), and the dashed line represents the number of memops (due to permutation) required for a given problem (right axis).}
\label{fig:ComputePermuteCountComparison}
\end{figure}

\subsection{Results}

Figures~\ref{fig:varyMTests}--\ref{fig:varyBOrder3Tests} show results from
executing our implementation on the target architecture.  The \Dense algorithm
does not take advantage of symmetry nor blocking of the data objects, whereas
the \BCSSshort algorithm takes advantage of both.
All figures show comparisons of the execution time of each algorithm, the
associated speedup of the \BCSSshort algorithm over the \Dense algorithm, and the
estimated storage savings factor of the \BCSSshort algorithm not including
storage required for meta-data.

For the experiments reported in Figure~\ref{fig:varyMTests} we fix the
dimensions $ n $ and $ p $, and the block-sizes $ b_{\T{A}} $ and $ b_{\T{C}} $,
and vary the tensor order $ m $.
Based on experiment, the \BCSSshort algorithm begins to outperform the \Dense
algorithm after the tensor order is greater than or equal to
$4$.  This effect for small $m$ should be understood in context of the experiments performed.
As $n=p=16$, the problem size is quite small (the problem for $m=2$ and $m=3$
are equivalent to a matrices of size $16 \times 16$ and $64 \times 64$
respectively) reducing the benefit of storage-by-blocks (as storing the entire
matrix contiguously requires minor space overhead but benefits greatly from more
regular data access). Since such small problems do not provide
useful comparisons for the reader, the results of using the \BCSSshort{}
algorithm with problem parameters $m=3$, $n=p=1000$, and varied block-dimensions
are given in \Fig{varyBOrder3Tests}.  \Fig{varyBOrder3Tests} shows that the
\BCSSshort algorithm is able to outperform the \Dense algorithm given a large
enough problem size and an appropriate block size.  Additionally, notice that
\BCSSshort allows larger problems to be solved; the \Dense algorithm was unable
to compute the result when an order-$8$ tensor was given as input
due to an inability to store the problem in memory.

Our model predicts that the \BCSSshort algorithm should achieve an
$O\left((m+1)!/2^m\right)$ speedup over the \Dense algorithm.  Although it
appears that our experiments are only achieving a linear speedup over the \Dense
algorithm, this is because the values of $m$ are so small that the predicted
speedup factor is approximately linear with respect to $m$.  In terms of storage
savings, we would expect the \BCSSshort algorithm to have an $O\left(m!\right)$
reduction in space over the \Dense algorithm.  The fact that we are not seeing
this in the experiments is because the block dimensions $\blkDim{\T{A}}$ and
$\blkDim{\T{C}}$ are relatively large when compared to the tensor dimensions $n$
and $p$ meaning the \BCSSshort algorithm does not have as great an opportunity
to reduce storage requirements.

In Figure~\ref{fig:varyNTests} we fix the order $ m $, the block-sizes $
b_{\T{A}} $, and $ b_{\T{C}} $ and vary the tensor dimensions $ n $ and $ p $.
We see that the \BCSSshort algorithm outperforms the \Dense algorithm and attains
a noticeable speedup.
The experiments show a roughly linear speedup when viewed relative to $n$ with
perhaps a slight leveling off effect towards larger problem dimensions.  We
would expect the \BCSSshort algorithm to approach a maximum speedup relative to
the \Dense algorithm.  According to \Fig{dense_compact_comparison}, we would
expect the speedup of the \BCSSshort algorithm over the \Dense algorithm to level
off completely when our problem dimensions ($n$ and $p$) are on the order of
$400$ to $600$.  Unfortunately, due to space limitations we were unable to test
beyond the $n=p=64$ problem dimension and therefore were unable to completely
observe the leveling off effect in the speedup.

In Figure~\ref{fig:varyBTests}  we fix $ m $, $ n $, and $ p $, and vary the
block sizes $ b_{\T{A}} $ and $ b_{\T{C}} $.
The right-most point on the axis corresponds to the dense case (as
$\blkDim{\T{A}} = n = \blkDim{\T{C}} = p$) and the left-most point corresponds
to the fully-compact case (where only unique entries are stored).
There now is a range of block dimensions for which the \BCSSshort algorithm
outperforms the \Dense algorithm. Further, the \BCSSshort algorithm performs as
well or worse than the \Dense counterpart at the two endpoints in the graph.  This is
expected toward the right of the figure as the \BCSSshort algorithm reduces to
the \Dense algorithm, however the left of the figure requires a different
explanation.

In \Fig{ComputePermuteCountComparison}, we illustrate (with predicted flop and
memop counts) that there exists a point where smaller block dimensions
\emph{dramatically} increases the number of memops required to compute the
operation.  Although a smaller block dimension results in less flops required
for computing, the number of memops required increases significantly more.  As
memops are typically significantly more expensive than flops, we can expect that
picking too small a block dimension can be expected to drastically degrade
overall performance.

\section{Conclusion and Future Work}
We present storage by blocks, \BCSSshort, for tensors and show how this
can be used to compactly store symmetric tensors.
The benefits are demonstrated with an implementation of a new algorithm for the
change-of-basis ({\tt  sttsm}) operation.  Theoretical and practical results
show that both the storage and computational requirements are reduced
relative to storing the tensors densely and computing without taking advantage
of symmetry.

This initial study exposes many new research opportunities for extending
insights from the field of high-performance linear algebra to multi-linear
computation, which we believe to be the real contribution of this paper.
We finish by discussing some of these opportunities.

\paragraph*{Optimizing tensor permutations}

In our work, we made absolutely no attempt to optimize the tensor permutation
operation.  Without doubt, a careful study of how to organize these
tensor permutations will greatly benefit performance.  It is likely that the current
implementation not only causes unnecessary cache misses, but also a great number
of Translation Lookaside Buffer (TLB) misses~\cite{Goto:2008:AHP}, which cause the core to stall for
a hundred or more cycles.

\paragraph*{Optimized kernels/avoiding tensor permutations}

A better way to mitigate the tensor permutations is to avoid them as much as possible.
If $ n = p $, the {\tt sttsm} operation performs $ O( n^{m+1} ) $ operations on
$ O( n^m ) $ data.  This exposes plenty of opportunity to optimize this kernel
much like {\tt dgemm}, which performs $ O( n^3 ) $ computation on $ O( n^2 ) $
data, is optimized.  For other tensor operations, the ratio is even more
favorable.

We are developing a BLAS-like library, BLIS~\cite{FLAWN66}, that allows matrix
operations with matrices that have both a row and a column stride, as opposed to
the traditional column-major order supported by the BLAS.  This means that
computation with a planar slice in a tensor can be passed into the BLIS
matrix-matrix multiplication routine, avoiding the explicit permutations that
must now be performed before calling {\tt dgemm}.  How to rewrite the
computations with blocks in terms of BLIS, and studying the performance
benefits, is a future topic of research.

One can envision creating a BLAS-like library for blocked tensor operations. 
One alternative for this is to apply the techniques developed as part of the
PHiPAC~\cite{PHiPAC1}, TCE, SPIRAL~\cite{Puschel:05}, or ATLAS~\cite{ATLAS}
projects to the problem of how to optimize computations with blocks.  This
should be a simpler problem than optimizing the complete tensor contraction
problems that, for example, TCE targets now, since the sizes of the operands are
restricted.  The alternative is to create microkernels for tensor computations,
similar to the microkernels that BLIS defines and exploits for matrix
computations, and to use these to build a high-performance tensor library that
in turn can then be used for the computations with tensor blocks.

\paragraph*{Algorithmic variants for the {\tt sttsm} operation}

For matrices, there is a second algorithmic variant for computing
$ \M{C} \becomes \M{X} \M{A} \M{X}^T $.  Partition $ \M{A} $ by rows
and $ \M{X} $ by columns:
\[
\M{A} = 
\left( \begin{array}{c}
\Vrow{A}_0 \\
\vdots  \\
\Vrow{A}_{n-1}
\end{array}
\right)
\quad
\mbox{and}
\quad
\M{X} = 
\left( \begin{array}{c c c}
\V{X}_0 &\cdots& \V{X}_{n-1}
\end{array}
\right).
\]
Then 
\[
\M{C} = 
\M{X} \M{A} \M{X}^T 
=
\left( \begin{array}{c c c}
\V{X}_0 &\cdots& \V{X}_{n-1}
\end{array}
\right)
\left( \begin{array}{c}
\Vrow{A}_0 \\
\vdots  \\
\Vrow{A}_{n-1}
\end{array}
\right)
\M{X}^T
=
\V{X}_0 ( \Vrow{A}_0 \M{X}^T ) + \cdots
\V{X}_{n-1} ( \Vrow{A}_{n-1} \M{X}^T ).
\]
We suspect that this insight can be extended to the {\tt sttsm} 
operation, yielding a new set of algorithm-by-blocks that will have 
different storage and computational characteristics.

\paragraph*{Extending the FLAME methodology to multi-linear operations}

In this paper, we took an algorithm that was 
systematically derived with the FLAME methodology for the matrix case and
then extended it to the equivalent tensor computation.
Ideally, we would derive algorithms directly from the specification of
the tensor computation, using a similar methodology.  This requires a
careful consideration of how to extend the FLAME notation for
expressing matrix algorithms, as well as how to then use that notation
to systematically derive algorithms.

\paragraph*{Multithreaded parallel implementation}

Multithreaded parallelism can be accomplished in a number of ways.
\begin{itemize}
\item
The code can be linked to a multithreaded implementation of the BLAS, thus attaining parallelism within the {\tt dgemm} call.  This would require one to hand-parallelize the permutations.
\item
Parallelism can be achieved by scheduling the operations with blocks to threads much like the SuperMatrix~\cite{SuperMatrix:TOMS} runtime does for the {\tt libflame} library, or PLASMA~\cite{MAGMA} does for its tiled algorithms.
\end{itemize}
We did not yet pursue this because at the moment the permutations contribute a
significant overhead to the overall computation which we speculate consumes
significant bandwidth.  As a result, parallelization does not make sense until
the cost of the permutations is mitigated.

\paragraph*{Exploiting accelerators}

In a large number of papers~\cite{Igual20121134,LowHangingFruit,PDSEC:09,MAGMA}, we and others have shown how the
algorithm-by-blocks (tiled algorithm)
approach, when combined with a runtime system, can exploit
(multiple) GPUs and other accelerators.  These techniques can be
naturally extended to accommodate the algorithm-by-blocks for tensor computations.

\paragraph*{Distributed parallel implementation}

Once we understand how to derive sequential algorithms, it becomes
possible to consider distributed memory parallel implementation.  It
may be that our insights can be incorporated into the Cyclops Tensor
Framework~\cite{CTF}, or that we build on our own experience with 
distributed memory libraries for dense matrix computations, the
PLAPACK~\cite{PLAPACK} and Elemental~\cite{Poulson:2012:ENF} libraries, to develop
a new distributed memory tensor library.

\paragraph*{General multi-linear library}

The ultimate question is, of course, how the insights in this paper
and future ones can be extended to a general, high-performance
multi-linear library, for all platforms.


\subsection*{Acknowledgments}
We would like to thank Grey Ballard for his insights in restructuring many parts
of this paper.
This work was partially sponsored by NSF grants ACI-1148125 and CCF-1320112.
This work was also supported by the Applied Mathematics program at the U.S.
Department of Energy.
Sandia National Laboratories is a multi-program laboratory managed and operated
by Sandia Corporation, a wholly owned subsidiary of Lockheed Martin Corporation,
for the U.S. Department of Energy's National Nuclear Security Administration
under contract DE-AC04-94AL85000.

 {\em Any opinions, findings and conclusions or recommendations
 expressed in this material are those of the author(s) and do not
 necessarily reflect the views of the National Science Foundation
 (NSF).}

\bibliographystyle{plain}


\newpage

\appendix
\section{Casting Tensor-Matrix Multiplication to BLAS}
\label{appendix:permutations} 
Given a tensor $\T{A} \in \mathbb{R}^{I_\IdxFirst \times \cdots \times
I_{\IdxLast{m}}}$, a mode $k$, and a matrix $\M{B} \in \mathbb{R}^{J \times
I_k}$, the result of multiplying $\M{B}$ along the $k$-th mode of
$\T{A}$ is denoted by 
$
\T{C} = \T{A} \times_k \M{B}
$,
where $\T{C} \in \mathbb{R}^{I_\IdxFirst \times \cdots \times I_{k-1} \times J
\times I_{k+1} \times \cdots \times I_{\IdxLast{m}}}$ and each element of
$\T{C}$ is defined as
\[
\T{C}_{\subthree{\subfour{i_\IdxFirst}{\cdots}{i_{k-1}}}{j_\IdxFirst}{\subthree{i_{k+1}}{\cdots}{i_{\IdxLast{m}}}}} = \sum_{i_k=\IdxFirst}^{I_k} \alpha_{\subthree{i_\IdxFirst}{\cdots}{i_{\IdxLast{m}}}} \beta_{j_\IdxFirst i_k}
.
\]
This operation is typically computed by casting it as a matrix-matrix
multiplication for which high-performance implementations are
available as part of the Basic Linear Algebra Subprograms (BLAS)
routine {\tt dgemm}.

The problem viewing a higher-order tensor as a matrix is analogous to
the problem of viewing a matrix as a vector.  We first describe this simpler
problem and show how it generalizes to objects of higher-dimension.

\paragraph*{Matrices as vectors (and vice-versa).} 
A matrix $\M{A} \in \matsz{m}{n}$ can be viewed as a vector $\V{A} \in
\mathbb{R}^{M}$ where $M = mn$ by assigning $\V{A}_{i_\IdxFirst + i_\IdxSecond
m} = \M{A}_{\subtwo{i_\IdxFirst}{i_\IdxSecond}}$.  (This is analogous
to column-major order assignment of a matrix to memory.)
This alternative view does not change the relative order of the elements in the matrix,
since it just logically views them in a different way.  We say that the two dimensions of
$\M{A}$ are merged or ``grouped'' to form the single index of $\V{A}$.

Using the same approach, we can view $\V{A}$ as $\M{A}$ by assigning the
elements of $\M{A}$ according to the mentioned equivalence.
In this case, we are in effect viewing the single index of $\V{A}$ as two
separate indices.  We refer to this effect as a ``splitting'' of the index of
$\V{A}$.

\paragraph*{Tensors as matrices (and vice-versa).} 
A straightforward extension of grouping of indices allows us to view
higher-order tensors as matrices and (inversely)
matrices as higher-order tensors.  The difference lies with the calculation used
to assign elements of the lower/higher-order tensor.

As an example, consider an order-4 tensor $\T{C} \in \mathbb{R}^{I_\IdxFirst
\times I_\IdxSecond \times I_\IdxThird \times I_{\IdxFourth}}$.  We can view
$\T{C}$ as a matrix $\M{C} \in \mathbb{R}^{J_\IdxFirst \times J_\IdxSecond}$
where $J_\IdxFirst = I_\IdxFirst \times I_\IdxSecond$ and $J_1 = I_\IdxThird
\times I_\IdxFourth$.  Because of this particular grouping of indices, the
elements as laid out in memory need not be rearranged (relative order of each
element remains the same).   This follows from the observation 
that memory itself is a linear array (vector) and realizing that 
if $\M{C}$
and $\T{C}$ are both mapped to a 1-dimensional vector using
column-major
order and its higher dimensional extension (which we call
dimensional order), 
both are stored identically.

\paragraph*{The need for permutation.}
If we wished to instead view our example \newline $\T{C} \in \mathbb{R}^{I_\IdxFirst \times
I_\IdxSecond \times I_\IdxThird \times I_{\IdxFourth}}$ as a matrix $\M{C} \in
\mathbb{R}^{J_\IdxFirst \times J_\IdxSecond}$ where, for instance, $J_\IdxFirst =
I_\IdxSecond$ and $J_\IdxSecond = I_\IdxFirst \times I_\IdxThird \times
I_\IdxFourth$, then this would require a rearrangement of the data
since mapping
$\M{C}$ and $\T{C}$ to memory using dimensional order will not
generally produce the same result for both.
This is a consequence of changing the relative order of indices in our
mappings.

This rearrangement of data is what is referred to as a \emph{permutation} of
data.  By specifying an input tensor $\T{A} \in \mathbb{R}^{I_\IdxFirst \times
\cdots \times
  I_{\IdxLast{m}}}$ and the desired permutation of indices of $\T{A}$,
$\pi$, we define the transformation $ \T{C} = {\rm permute}(\T{A}, \pi) $ that
yields $\T{C} \in \mathbb{R}^{I_{\pi_{\IdxFirst}} \times I_{\pi_{\IdxSecond}}
\times \cdots \times I_{\pi_{\IdxLast{m}}}}$ so that
$\T{C}_{\subthree{i'_\IdxFirst}{\cdots}{i'_{\IdxLast{m}}}} =
\T{A}_{\subthree{i_{\IdxFirst}}{\cdots}{i_{\IdxLast{m}}}}$ where $i'$
corresponds to the result of applying the permutation $\pi$ to $i$.
The related operation ${\rm ipermute}$ inverts this transfomation when supplied
$\pi$ so that $ \T{C} = ipermute(\T{A}, \pi) $ yields $\T{C} \in
\mathbb{R}^{I_{\pi^{-1}_{\IdxFirst}} \times I_{\pi^{-1}_{\IdxSecond}} \times
\cdots \times I_{\pi^{-1}_{\IdxLast{m}}}}$ where
$\T{C}_{\subthree{i'_\IdxFirst}{\cdots}{i'_{\IdxLast{m}}}} =
\T{A}_{\subthree{i_{\IdxFirst}}{\cdots}{i_{\IdxLast{m}}}}$ where $i'$
corresponds to the result of applying the permutation $\pi^{-1}$ to $i$.

\paragraph*{Casting a tensor computation in terms of a matrix-matrix multiplication.}

We can now show how the operation
$
\T{C} = \T{A} \times_k \M{B}
$,
where $\T{A} \in \mathbb{R}^{I_\IdxFirst \times \cdots \times I_{\IdxLast{m}}}$,
$\M{B} \in \mathbb{R}^{J \times I_k}$, and $\T{C} \in \mathbb{R}^{I_\IdxFirst
\times \cdots \times I_{k-1} \times J \times I_{k+1} \times \cdots \times
I_{\IdxLast{m}}}$, can be cast as a matrix-matrix multiplication if
the tensors are appropriately permuted. The following describes the algorithm:
\begin{enumerate}
  \item Permute: $\T{P}_{\T{A}} \leftarrow {\rm permute}(\T{A}, \{k,\IdxFirst,\ldots,k-1,k+1,\ldots,\IdxLast{m}\})$.
  \item Permute: $\T{P}_{\T{C}} \leftarrow {\rm permute}(\T{C}, \{k,\IdxFirst,\ldots,k-1,k+1,\ldots,\IdxLast{m}\})$.
  \item View tensor $\T{P}_{\T{A}} $ as matrix $\M{A} $: $\M{A} \leftarrow \T{P}_{\T{A}}$, where $\M{A} \in \mathbb{R}^{I_k \times J_\IdxSecond}$ and $J_\IdxSecond = I_\IdxFirst \cdots I_{k-1} I_{k+1} \cdots I_{\IdxLast{m}}$.
  \item View tensor $\T{P}_{\T{C}} $ as matrix $\M{C} $: $\M{C} \leftarrow \T{P}_{\T{C}}$, where $\M{C} \in \mathbb{R}^{J \times J_\IdxSecond}$ and $J_\IdxSecond = I_\IdxFirst \cdots I_{k-1} I_{k+1} \cdots I_{\IdxLast{m}}$.
  \item Compute matrix-matrix product: $\M{C} \becomes \M{B}\M{A}$.
  \item View matrix $\M{C} $ as tensor $\T{P}_{\T{C}} $: 
$\T{P}_{\T{C}} \leftarrow \M{C}$, \newline where $\T{P}_{\T{C}} \in \mathbb{R}^{J \times I_{\IdxFirst} \times \cdots \times I_{k-1} \times I_{k+1} \times \cdots \times I_{\IdxLast{m}}}$.
  \item ``Unpermute'': $\T{C} \leftarrow {\rm ipermute}(\T{P}_{\T{C}}, \{k, \IdxFirst, \ldots, k-1,k+1,\ldots,\IdxLast{m}\})$.
\end{enumerate}
Step 5. can be implemented by a call to the BLAS routine {\tt dgemm},
which is typically highly optimized.

\section{Design Details}
\label{appendix:meta-data}
\begin{figure}[tbp]
\centering
\begin{tabular}{@{} c @{\hspace{-0.85in}} c}
\includegraphics[width=3.45in]{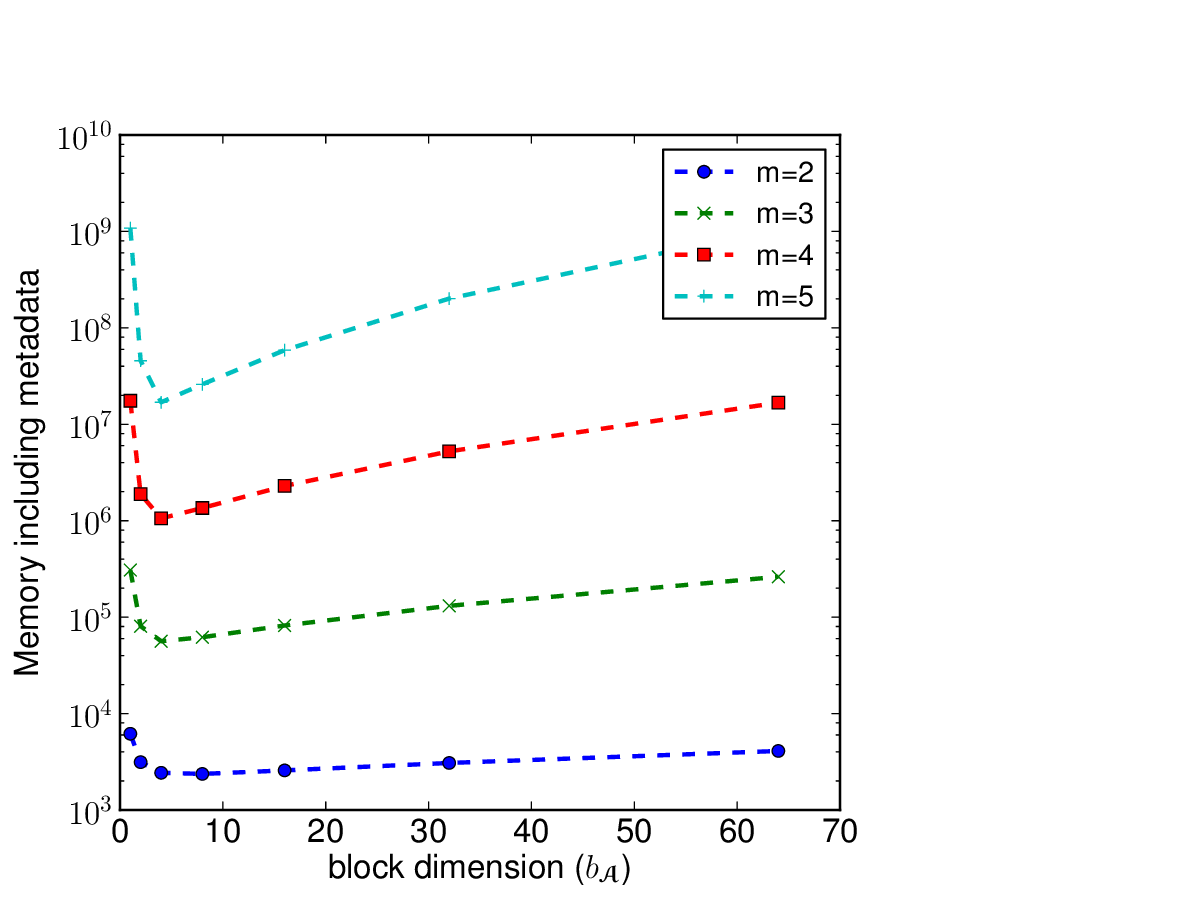} &
\includegraphics[width=3.45in]{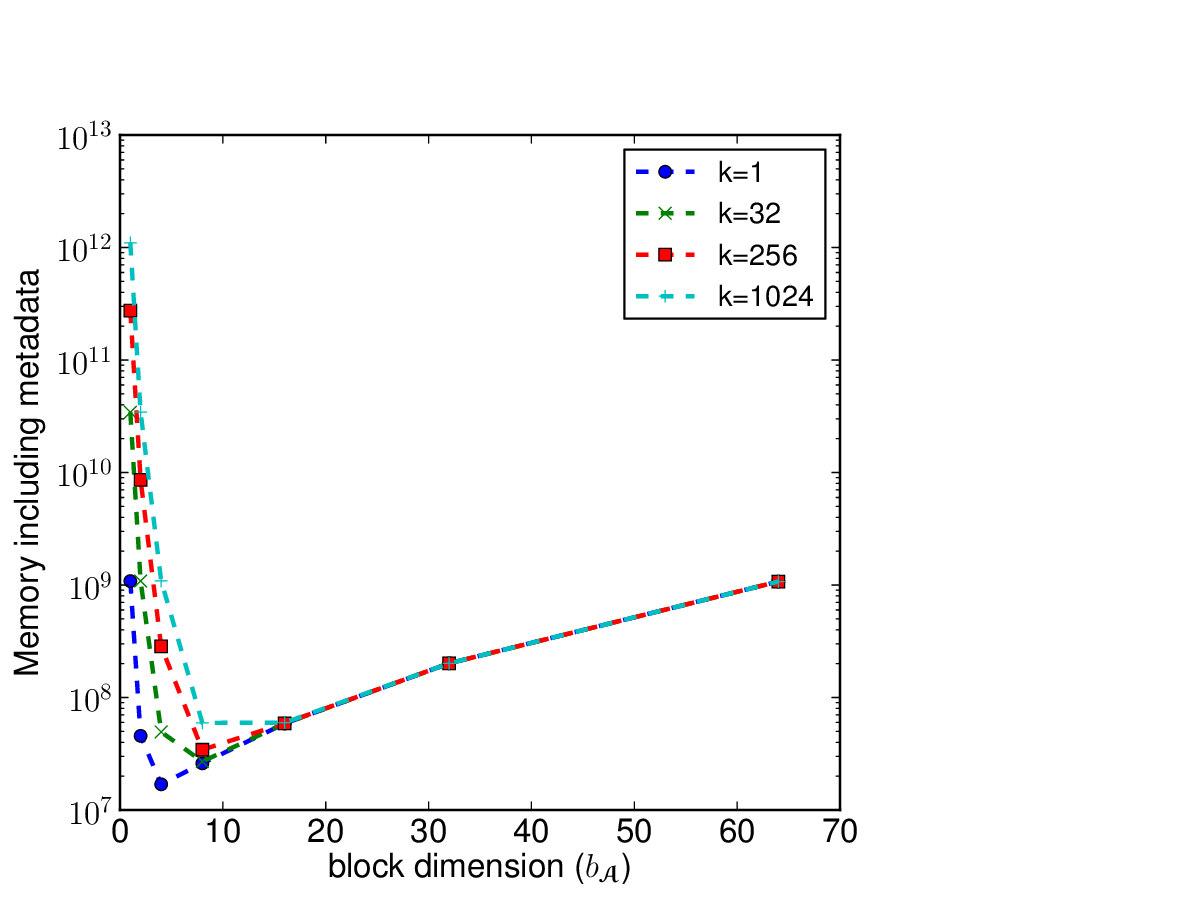} \\
\end{tabular}
\caption{Left: Storage requirements of $\T{A} \in \tensz{m}{64}$ as block dimension 
changes. Right: Storage requirements of $\T{A} \in \tensz{5}{64}$ for
different choices for the meta-data  stored per block, measured by $k$, as block dimension changes.}
\label{fig:varyBActualStorage}
\end{figure}

We now give a few details about the particular implementation of
\BCSSshort,
and how this impacts storage requirements.  Notice that this is one
choice for implementing this storage scheme in practice.  One can
envision other options that, at the expense of added complexity in the
code, reduce the memory footprint.

\BCSSshort views tensors hierarchically.
At the top level, there is a tensor where each element of that tensor is itself
a tensor (block).
Our way of implementing this stores a description (meta-data) for a block in
each element of the top-level tensor. 
This meta-data adds to memory requirements.
In our current implementation, the top-level tensor of meta-data is itself a
dense tensor.  The meta-data in the upper hypertriangular tensor describes
stored blocks.  The meta-data in the rest of the top-level tensor reference the
blocks that correspond to those in the upper hypertriangular tensor (thus
requiring an awareness of the permutation needed to take a stored block and
transform it).
This design choice greatly simplifies our implementation (which we hope to
describe in a future paper).
We show that although the meta-data can potentially require considerable space,
this can be easily mitigated.  We use $\T{A}$ for example purposes.

Given $\T{A} \in \tensz{m}{n}$ stored with \BCSSshort with block dimension
$\blkDim{\T{A}}$, we must store meta-data for $\blkedDimA^m$ blocks where
$\blkedDimA = \lceil n / \blkDim{\T{A}} \rceil$.  This means that the total cost
of storing $\T{A}$ with \BCSSshort is
\[
C_{\rm storage}(\T{A}) = k\blkedDimA^m +
\blkDim{\T{A}}^m\binom{\blkedDimA + m - 1}{m}
\mbox{~floats},
\]
$k$ is a
constant representing the amount of storage required for the meta-data
associated with one block, in floats.  Obviously, this meta-data is
of a different datatype, but floats will be our unit.

There is a tradeoff between the cost for storing the
meta-data
and the actually entries of $ \T{A} $, parameterized by 
the blocksize $\blkDim{\T{A}} $:
\begin{itemize}
\item
If $\blkDim{\T{A}}=n$,
then we only require a minimal amount of memory for meta-data, $ k $
floats, but must store all entries of $\T{A}$ since there now is only
one block, and that block uses dense storage.  We thus store slightly
more than we would if we stored the tensor without symmetry.
\item
If 
$\blkDim{\T{A}}=1$, 
then $\blkedDimA=n$ and we must store meta-data for each
element, meaning we store much more data than if we just used a dense
storage scheme.  
\end{itemize}
Picking a block dimension somewhere between these two extremes results in
a smaller footprint overall.  For example, if we choose a block dimension
$\blkDim{\T{A}}=\sqrt{n}$, then $\blkedDimA=\sqrt{n}$ and the total storage
required to store $\T{A}$ with \BCSSshort is
\[
\begin{array}{l c l}
C_{\rm storage(\T{A})} & = & k\blkedDimA^{m} + \blkDim{\T{A}}^m\binom{\blkedDimA + m - 1}{m} 
 =  kn^{\frac{m}{2}} + n^{\frac{m}{2}}\binom{n^{\frac{m}{2}} + m - 1}{m} \\
 & \approx & kn^{\frac{m}{2}} + n^{\frac{m}{2}}\left(\frac{n^{\frac{m}{2}}}{m!}\right) 
 =  n^{\frac{m}{2}}\left(k + \frac{n^{\frac{m}{2}}}{m!}\right)
 \mbox{~\rm floats},\\
 \end{array}
 \]
 which, provided that $k \ll \frac{n^{\frac{m}{2}}}{2}$, is significantly smaller
 than the storage required for the dense case ($n^m$).  
This discussion suggests that a point exists that
 requires less storage than the dense case (showing that \BCSSshort
 is a feasible solution).
 
In Figure~\ref{fig:varyBActualStorage}, we illustrate that as long as we pick a
block dimension that is greater than $4$, we avoid incurring extra costs due to
meta-data storage.  It should be noted that changing the dimension of the
tensors also has no effect on the minimum, however if they are too small, then
the dense storage scheme may be the minimal storage scheme.
Additionally, adjusting the order of tensors has no real effect on the block
dimension associated with minimal storage. However increasing the amount of
storage allotted for meta-data slowly shifts the optimal block dimension choice
towards the dense storage case.

\end{document}